\documentclass[12pt]{amsproc}

\usepackage{amsmath, amssymb}


\newtheorem{theorem}{\sc Theorem}[section]
\newtheorem{lemma}[theorem]{\sc Lemma}
\newtheorem{proposition}[theorem]{\sc Proposition}
\newtheorem{corollary}[theorem]{\sc Corollary}

\newtheorem{hypothesis}[theorem]{\sc Hypothesis}

\newcommand{\F}{\mathcal{F}}

\newcommand{\CF}{\mathcal{CF}}

\newcommand{\C}{\mathcal{C}}
\newcommand{\X}{\mathcal{X}}

\newcommand{\g}{{t}} 

%

\begin{document}

\title[Coverings by cosets]
{On profinite groups with commutators covered by countably many cosets 
}
\author[E. Detomi]{Eloisa Detomi}
\address{Dipartimento di Matematica, Universit\`a di Padova \\
 Via Trieste 63\\ 35121 Padova \\ Italy}
\email{detomi@math.unipd.it}
\author[M. Morigi]{Marta Morigi}
\address{Dipartimento di Matematica, Universit\`a di Bologna\\
Piazza di Porta San Donato 5 \\ 40126 Bologna\\ Italy}
\email{marta.morigi@unibo.it}
\author[P. Shumyatsky]{Pavel Shumyatsky}
\address{Department of Mathematics, University of Brasilia\\
Brasilia-DF \\ 70910-900 Brazil}
\email{pavel@unb.br}


\subjclass[2000]{Primary 20E18; Secondary  20F12; 20F14.}
\keywords{Profinite groups; Coverings; Cosets; Commutators.}

\begin{abstract} Let $w$ be a group-word. Suppose that the set of all $w$-values in a profinite group $G$ is contained in a union of countably many cosets of subgroups. We are concerned with the question to what extent the structure of the verbal subgroup $w(G)$ depends on the properties of the subgroups. We prove the following theorem.

Let $\C$ be a class of groups closed under taking subgroups, quotients, and such that in any group the product of finitely many normal $\C$-subgroups is again a $\C$-subgroup. If $w$ is a multilinear commutator and $G$ is a profinite group such that the set of all $w$-values is contained in a union of countably many cosets $g_iG_i$, where each $G_i$ is in $\C$, then the verbal subgroup $w(G)$ is virtually-$\C$.

This strengthens several known results.\end{abstract}

\maketitle

\section{Introduction} A  covering of a group $G$ is a family $\{S_i\}_{i\in I}$ of subsets of $G$ such that $G=\bigcup_{i\in I}\,S_i$. The famous result of B.H. Neumann states that if $\{S_i\}$ is a finite covering of $G$ by cosets of subgroups, then $G$ is actually covered by the cosets $S_i$ corresponding to subgroups of finite index in $G$ \cite{neumann}. Therefore whenever a group $G$ is covered by finitely many cosets of subgroups it is natural to expect that some structural information about $G$ can be deduced from the properties of the subgroups. In other words, the general question is to what extent properties of the covering subgroups impact the structure of $G$.

In recent years some ``verbal" variations of these questions became a subject of research activity. Given a group-word $w=w(x_1,\dots,x_n)$, we think of it primarily as a function of $n$ variables defined on any given group $G$. We denote by $w(G)$ the verbal subgroup of $G$ generated by the values of $w$. When the set of all $w$-values in a group $G$ is contained in a union of finitely many subgroups (or cosets of subgroups) we wish to know   whether the properties of the covering subgroups have impact on the structure of the verbal subgroup $w(G)$. The present article deals with the situation when $G$ is a profinite group.

In the context of profinite groups all the usual concepts of group theory are interpreted topologically. In particular, by a subgroup of a profinite group we mean a closed subgroup. A subgroup is said to be generated by a set $S$ if it is topologically generated by $S$. Thus, the verbal subgroup $w(G)$ in a profinite group $G$ is a minimal closed subgroup containing the set of $w$-values. One important tool for dealing with  the ``covering" problems in profinite groups is the classical Baire's category theorem (cf \cite[p.\ 200]{Ke}): If a locally compact Hausdorff space is a union of countably many closed subsets, then at least one of the subsets has non-empty interior. It follows that if a profinite group is covered by countably many cosets of subgroups, then at least one of the subgroups is open. Thus, in the case of profinite groups we can successfully deal with problems on countable coverings rather than just finite ones.

The reader can consult the articles \cite{surveyrendiconti,as,DMS1,DMS-revised,DMS-nilpotent, Snilp} 
 for results on countable coverings of word-values by 
subgroups. One of the results obtained in \cite{DMS-nilpotent} is that if $w$ is a multilinear commutator and $G$ is a profinite group, 
then $w(G)$ is finite-by-nilpotent if and only if the set of $w$-values in $G$ is covered by countably many finite-by-nilpotent subgroups 
(see Section \ref{sec:prelim} for the definition of multilinear commutator). It is easy to see that the above result is no longer true 
if the set of $w$-values in $G$ is covered by countably many cosets of finite-by-nilpotent subgroups. This can be exemplified by any 
profinite group $G$ having $w(G)$ virtually nilpotent but not finite-by-nilpotent. In the present article we study groups in which 
the set of $w$-values is covered by countably many cosets of $\mathcal{C}$-subgroups, where $\C$ is a class of groups closed under 
taking subgroups, quotients, and such that in any group the product of finitely many normal $\C$-subgroups is again a $\C$-subgroup.

Our main result is as follows. 

\begin{theorem}\label{main} Let $\C$ be a class of groups closed under taking subgroups, quotients, and such that in any group the product of finitely 
many normal 
$\C$-subgroups is again a $\C$-subgroup.  Let $w$ be a multilinear commutator word. The verbal subgroup $w(G)$ of a profinite group $G$ is virtually-$\mathcal{C}$ if and only if the set of $w$-values in $G$ is covered by countably many cosets of $\mathcal{C}$-subgroups. 
\end{theorem}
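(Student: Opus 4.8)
Assume first that $w(G)$ is virtually-$\C$; say $w(G)$ contains a subgroup $L\in\C$ of finite index. Replacing $L$ by the intersection of its finitely many conjugates in $w(G)$ keeps it in $\C$ (closure under subgroups) and makes it normal of finite index, so $w(G)$, and a fortiori the set $G_w$ of $w$-values, is covered by the finitely many cosets of the $\C$-subgroup $L$. This gives the easy implication.

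Conversely, assume $G_w\subseteq\bigcup_{i\ge1}g_iG_i$ with each $G_i$ a closed $\C$-subgroup. The plan is to localize with Baire's theorem and then to globalize using the structure of multilinear commutators. For the localization: $G_w$ is the image of the compact space $G^n$ under the continuous map $(x_1,\dots,x_n)\mapsto w(x_1,\dots,x_n)$, hence closed, and each $g_iG_i$ is closed, so Baire's category theorem applied to $G_w=\bigcup_i(G_w\cap g_iG_i)$ produces an index $j$, an open normal subgroup $N\trianglelefteq G$, and an element $h\in G_w$ with $\emptyset\ne G_w\cap hN\subseteq hG_j$. Putting $J=G_j\cap N$ we get $J\in\C$, $[G_j:J]\le[G:N]<\infty$, and every $w$-value lying in $hN$ lies in $hJ$. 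Since the value set of a multilinear commutator word is closed under conjugation and inversion and contains $1$, this control propagates: for each $x\in G$, all $w$-values in $x(hN)x^{-1}$ lie in a single coset of the $\C$-subgroup $xJx^{-1}$, so that ratios $t^{-1}t'$ of $w$-values lying in a common $G$-conjugate of $hN$ lie in $G$-conjugates of $J$.

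It remains to globalize, i.e.\ to conclude that $w(G)$ is virtually-$\C$. The plan has two parts. First, using the standard identities for multilinear commutator words — relating $w(G)$ with verbal subgroups of finite-index subgroups and with $w$-values lying in a normal subgroup — I would show that the ratio data above already pins down a finite-index subgroup $S$ of $w(G)\cap N$, and hence of $w(G)$ since $[w(G):w(G)\cap N]\le[G:N]$; concretely this is done by iterating the Baire step on the still-uncovered (closed) part of $G_w$ and assembling the finitely many pieces. Second, $S$ is contained in the subgroup generated by finitely many $G$-conjugates of $J$; arranging (again via the multilinear-commutator machinery) that these conjugates may be taken normal in a common finite-index subgroup, and invoking the hypothesis that a finite product of normal $\C$-subgroups is a $\C$-subgroup, one obtains a $\C$-subgroup of finite index in $w(G)$, so $w(G)$ is virtually-$\C$. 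I expect the first part to be the main obstacle: a single application of Baire controls $w$-values only in one conjugacy class of cosets of $N$, and iterating it may force $N$ to shrink indefinitely, so the crux is to prove that finitely many iterations suffice. This is where the combinatorics of the finite quotients $G/N$, the monotonicity of the covered portion of $G_w$ under refinement of $N$, and the elementary remark that $1\in G_w$ for every multilinear commutator word — which lets one normalize the cosets coming out of Baire so that the relevant $\C$-subgroup genuinely contains, rather than merely translates, the corresponding $w$-values — have to be combined carefully.
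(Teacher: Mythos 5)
The easy direction and the first Baire step are fine, but from that point on your argument is a plan rather than a proof, and the plan does not close the gap it itself identifies. Concretely: applying Baire to the closed set $G_w$ of $w$-values only tells you that the $w$-values inside one coset $hN$ lie in one coset of a $\C$-subgroup; the resulting ``ratio data'' $t^{-1}t'\in J^x$ gives no control over the \emph{subgroup} generated by $w$-values, since arbitrary products of such ratios (or of elements from infinitely many conjugates of $J$) need not lie in any $\C$-subgroup --- the class $\C$ is only closed under products of \emph{finitely many normal} $\C$-subgroups. Your proposed fix, iterating Baire on the still-uncovered part of $G_w$ and hoping finitely many iterations suffice, is exactly the step you flag as the main obstacle, and no mechanism is offered to force termination: the covering is countable, and nothing prevents the open subgroup from shrinking indefinitely. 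Likewise, the claim that the relevant conjugates of $J$ ``may be taken normal in a common finite-index subgroup'' is asserted, not proved.

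The paper proceeds quite differently, and the difference is essential. Baire is applied not to $G_w$ but to $G^n$ via the sets $S_i=\{(g_1,\dots,g_n):w(g_1,\dots,g_n)\in t_iG_i\}$, giving an open normal $H$ with $\X_w(a_1H,\dots,a_nH)$ inside a single coset; a purely combinatorial lemma on multilinear commutators (Lemma \ref{uno} / Corollary \ref{w(H)}) then upgrades this to $w(H)\le G_j$, i.e.\ $w(H)\in\C$ --- a statement about a verbal subgroup, not just about values in one coset. A second Baire argument gives, for each $w$-value $a$, an open normal $H_a$ with $[H_a,a]\in\C$, and an FC-element argument (Lemma \ref{FC}) yields that $G^{(2n)}$ is virtually-$\C$ (Proposition \ref{delta}). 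Setting $T=G^{(2n)}w(H)$, the quotient $G/T$ is soluble, so only finitely many of the subgroups $w(\mathbf{i})=w(G^{(i_1)},\dots,G^{(i_n)})$ survive modulo $T$, and a finite induction over this poset, powered by Lemmas \ref{step1}--\ref{basic-step} (which repeatedly combine Baire with the coset-to-subgroup Lemmas \ref{zerobis}, \ref{uno}, \ref{M} and Corollary \ref{pow}), shows $w(G)\in\C\F$. This machinery is precisely what replaces your unterminated iteration, and it is absent from your proposal; as written, the proposal does not establish the hard direction.
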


We note that many natural classes of groups have the properties as the class $\C$ in the above theorem. For instance, $\C$ can be the class of nilpotent, pronilpotent, locally nilpotent, or soluble groups. Further examples include torsion groups and groups of finite rank. 
It is been known for sometime that if $w$ is a multilinear commutator  and
a profinite group $G$ has countably many soluble subgroups whose union contains all $w$-values, 
 then $w(G)$ is virtually soluble \cite[Theorem 7]{adm2012}. If $G$ has countably many torsion subgroups 
 (or subgroups of finite rank) whose union contains all $w$-values, then $w(G)$ is torsion (or of finite rank) \cite{DMS1}. 
 Obviously, Theorem \ref{main} extends these results. Moreover, in the case where $\C$ is the class of all finite groups, we obtain that 
 the set of $w$-values in a profinite group $G$ is countable if and only if $w(G)$ is finite. This was one of the main results in \cite[Theorem 1.1]{jpaa2}. 

A few words about the tools employed in the proof of Theorem \ref{main}. Rather specific combinatorial techniques for handling multilinear 
commutator words were developed in \cite{fernandez-morigi, DMS1,DMS-revised}.  The present article is based on further refinements of those techniques. It seems that any attempt to prove a result of similar nature for words that are not multilinear commutator words would require a different approach.

\section{Preliminary results}\label{sec:prelim}  
 
Throughout, we use the same symbol to denote a group-theoretical property and the class of groups with that property. If $\C$ is a class of 
groups, a virtually-$\C$ group is a group with a normal $\C$-subgroup of finite index. 
The class of virtually-$\C$ groups will be denoted by $\C\F$. 

Let $\C$ be a class of groups closed under taking subgroups, quotients, and such that  in any group the product of finitely many 
normal $\C$-subgroups is again a  $\C$-subgroup. For instance $\C$ is the class of nilpotent, soluble, or finite groups. The next two lemmas are analogues of 
Lemma 2.2 of \cite{DMS-nilpotent} and Lemma 2.6 of \cite{DMS-revised}, respectively. Therefore we omit their proofs.

\begin{lemma} \label{normalclosure} In any group a product of finitely many normal $\CF$-subgroups is again in $\CF$.
\end{lemma}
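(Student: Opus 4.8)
The plan is to induct on the number of factors and reduce everything to the case of two. Indeed, if $N_1,\dots,N_k$ are normal $\CF$-subgroups of a group $G$, then $P=N_1\cdots N_{k-1}$ is again normal in $G$ and, by the inductive hypothesis, lies in $\CF$; so it is enough to show that the product $N=N_1N_2$ of two normal $\CF$-subgroups $N_1,N_2$ of a group is in $\CF$.

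Fix, for $i=1,2$, a subgroup $C_i\le N_i$ with $C_i\in\C$ and $[N_i:C_i]<\infty$; replacing $C_i$ by a suitable finite-index subgroup (the core of the original one in $N_i$, which is still in $\C$ since $\C$ is subgroup-closed) we may assume in addition that $C_i\trianglelefteq N_i$. The point where care is needed is that $C_i$ is normal only in $N_i$, not in the product $N$, and one cannot simply pass to the core of $C_i$ in $N$, since $N$ may have infinite index over $N_i$ and $C_i$ may then have infinitely many $N$-conjugates. Instead I would replace $C_i$ by its normal closure $C_i^{N}$ in $N$. Since $N_i\trianglelefteq N$ we have $C_i^{N}\le N_i$, and as $C_i\le C_i^{N}$ the index $[N_i:C_i^{N}]$ is finite. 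For membership in $\C$: every $N$-conjugate $C_i^{g}$ is a normal $\C$-subgroup of $N_i$ (because $C_i\trianglelefteq N_i$ and $N_i^{g}=N_i$), and since $N_i/C_i$ is a finite group, the images of the $C_i^{g}$ in $N_i/C_i$ generate a finite subgroup, hence already finitely many of them — say the images of $C_i^{g_1},\dots,C_i^{g_r}$, with $g_1=1$ — generate it. Then $M=C_i^{g_1}\cdots C_i^{g_r}$ is a subgroup of $N_i$, normal in $N_i$ (a product of finitely many subgroups normal in $N_i$), it contains $C_i$, and the choice of the $g_j$ forces $C_i^{g}\le MC_i=M$ for every $g\in N$; hence $M=C_i^{N}$. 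Being a product of finitely many normal $\C$-subgroups of $N_i$, the group $M=C_i^{N}$ belongs to $\C$.

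It remains to put the pieces together: $C_1^{N}$ and $C_2^{N}$ are normal $\C$-subgroups of $N$, so $C_1^{N}C_2^{N}$ is a normal subgroup of $N$ and lies in $\C$, being a product of two normal $\C$-subgroups of $N$. A routine index computation, using $N=N_1N_2$ and $[N_i:C_i^{N}]<\infty$, shows that $[N:C_1^{N}C_2^{N}]<\infty$; thus $N$ has a normal $\C$-subgroup of finite index, i.e.\ $N\in\CF$, which completes the induction. The only genuine obstacle is the step producing a $\C$-subgroup of finite index that is normal in the whole product $N$; once the normal closure is shown to remain in $\C$ by exploiting the finiteness of $N_i/C_i$, everything else is formal, along the lines of Lemma 2.2 of \cite{DMS-nilpotent}.
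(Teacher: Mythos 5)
Your proof is correct. The paper itself omits the argument for this lemma, referring instead to its analogue (Lemma 2.2 of \cite{DMS-nilpotent}), so there is nothing in the text to compare against line by line; but your write-up supplies a complete proof of exactly the expected kind: reduce to two factors $N=N_1N_2$, and replace the normal $\C$-subgroup $C_i$ of finite index in $N_i$ by its normal closure $C_i^{N}$. You correctly isolate and resolve the one genuine subtlety, namely that $C_i$ may have infinitely many $N$-conjugates, so one cannot just quote closure of $\C$ under finite products directly for all conjugates; your observation that the images of the conjugates in the finite quotient $N_i/C_i$ are generated by finitely many of them, whence $C_i^{N}=C_i^{g_1}\cdots C_i^{g_r}$ is a product of finitely many subgroups normal in $N_i$ and hence lies in $\C$, is exactly what is needed (note that each $C_i^{g}$ lies in $\C$ because quotient-closure gives closure under isomorphic images). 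The final steps --- $C_1^{N}C_2^{N}\in\C$ is normal in $N$ and of finite index since $[N_i:C_i^{N}]<\infty$ and $N=N_1N_2$, plus the induction on the number of factors --- are routine and correctly handled.
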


If $A$ is a subset of a group $G$, we write $\langle A \rangle$ for the subgroup generated by $A$. If $B$ is another subset, 
we  denote by $A^B$ the set $\{a^b\mid a\in A\textrm{ and } b\in B\}.$

 \begin{lemma}
Let $L$ be a subgroup of a profinite group $G$ such that the normalizer $N_G(L)$ is open. 
\begin{enumerate}
\item If $L$ is finite, then $\langle L^G\rangle$ is finite. 
\item If $L$ is in $\C$ and $H$ is a normal open subgroup of $G$ contained in $N_G(L)$, 
then $\langle (L\cap H)^G\rangle$ is in $\C$. 
\end{enumerate}
\end{lemma}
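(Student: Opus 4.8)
The plan is to treat the two parts in turn, the common feature being that the hypothesis forces $N_G(L)$ to have finite index in $G$, so that $L$, and every element of $L$, has only finitely many conjugates.

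For part (1) I would set $Y=\bigcup_{g\in G}L^g$. Since $[G:N_G(L)]<\infty$ there are only finitely many distinct conjugate subgroups $L^g$, each of them finite; hence $Y$ is a finite set, it consists of elements of finite order, and it is invariant under conjugation by $G$ (being a union of a full conjugacy class of subgroups). By Dietzmann's lemma in the form ``a finite, conjugation-invariant set of torsion elements generates a finite subgroup'', the abstract subgroup generated by $Y$ is finite; a finite subgroup of a profinite group is closed, so this abstract subgroup is exactly $\langle L^G\rangle$, which is therefore finite. (Equivalently: each $l\in L$ has finite order, and since $N_G(L)$ permutes the finite set $L$ and has finite index, $C_G(l)$ has finite index in $G$; thus each $\langle l^G\rangle$ is finite by Dietzmann, and $\langle L^G\rangle=\prod_{l\in L}\langle l^G\rangle$ is a finite product of finite normal subgroups.)

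For part (2), first note that since $H\le N_G(L)$ normalizes both $L$ and $H$, it normalizes $L\cap H$. Let $T$ be a finite transversal of the open subgroup $H$ in $G$, with $G=\bigcup_{t\in T}Ht$. Writing $g\in G$ as $g=ht$ with $h\in H$, $t\in T$, and using that $h$ normalizes $L\cap H$, one gets $(L\cap H)^g=(L\cap H)^t$, so $(L\cap H)^G=\{(L\cap H)^t:t\in T\}$ is a finite set. Each $(L\cap H)^t$ is isomorphic to the subgroup $L\cap H$ of $L$, hence lies in $\C$; and since $H\trianglelefteq G$ we have $(L\cap H)^t\le H^t=H$, so $D:=\langle (L\cap H)^G\rangle\le H$. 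Moreover $H$ normalizes every $(L\cap H)^t$: for $h\in H$ one has $\bigl((L\cap H)^t\bigr)^h=\bigl((L\cap H)^{tht^{-1}}\bigr)^t$, and $tht^{-1}\in H$ normalizes $L\cap H$. As $D\le H$, it follows that $D$ normalizes each $(L\cap H)^t$; since also $(L\cap H)^t\le D$, every $(L\cap H)^t$ is normal in $D$. Hence $D=\prod_{t\in T}(L\cap H)^t$ is a product of finitely many normal $\C$-subgroups of $D$, so $D\in\C$ by the defining property of the class $\C$. One should also observe that this product, being the continuous image of the compact group $\prod_{t\in T}(L\cap H)^t$ under multiplication, is closed, so $D$ really is the closed subgroup $\langle (L\cap H)^G\rangle$.

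The conjugacy bookkeeping is routine; the point on which part (2) turns is the observation that the conjugates $(L\cap H)^t$ are normal in the subgroup $D$ that they generate — coming from $D\le H$ together with the fact that $H$ normalizes each $(L\cap H)^t$ — since this is exactly what makes the ``product of finitely many normal $\C$-subgroups is a $\C$-subgroup'' property applicable. In part (1) there is essentially nothing beyond Dietzmann's lemma and the finite index of $N_G(L)$.
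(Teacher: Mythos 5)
Your proof is correct: Dietzmann's lemma (a finite conjugation-invariant set of torsion elements generates a finite subgroup) settles (1), and in (2) the observation that $(L\cap H)^g=(L\cap H)^t$ for a finite transversal $t$ of $H$, that each conjugate lies in $H$ and is normalized by $H$, and that the resulting finite product of normal $\C$-subgroups is compact (hence closed, so abstract and topological generation agree) gives exactly the claim. The paper itself omits the proof, referring to the analogous Lemma 2.6 of \cite{DMS-revised} (and Lemma 2.2 of \cite{DMS-nilpotent}), and your argument is the same standard one used there.
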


Throughout this section $w=w(x_1,\dots,x_n)$ is a  multilinear commutator.
Multilinear commutators are words which are obtained by nesting commutators, but using always different variables. More formally, the word $w(x) = x$ in one variable is a multilinear commutator; if $u$ and $v$ are  multilinear commutators involving different variables then the word $w=[u,v]$ is a multilinear commutator, and all multilinear commutators are obtained in this way.
 
An important family of  multilinear commutators is formed by so-called derived words $\delta_k$,  
  on $2^k$ variables,   defined recursively by
$$\delta_0=x_1,\qquad \delta_k=[\delta_{k-1}(x_1,\ldots,x_{2^{k-1}}),\delta_{k-1}(x_{2^{k-1}+1},\ldots,x_{2^k})].$$
 Of course  $\delta_k(G)=G^{(k)}$ is the $k$-th  term of the derived series of $G$. 

We recall the following well-known result (see for example \cite[Lemma 4.1]{S2}). 
\begin{lemma}\label{lem:delta_k} Let $G$ be a group and let $w$ be a multilinear commutator on $n$ variables. Then each $\delta_n$-value is a $w$-value.
\end{lemma}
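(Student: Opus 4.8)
The plan is to argue by induction on $n$, the number of variables of $w$, after first recording an elementary fact about the derived words. \textbf{Claim:} for all integers $m\ge p\ge 0$, every $\delta_m$-value in $G$ is also a $\delta_p$-value. This is proved by induction on $p$. For $p=0$ there is nothing to do, since $\delta_0=x_1$ and every element of $G$ is a $\delta_0$-value. For $p\ge 1$, a $\delta_m$-value has, by the definition of $\delta_m$ (and since $m\ge p\ge 1$ ensures $m\ge 1$), the form $[g_1,g_2]$ with $g_1,g_2$ both $\delta_{m-1}$-values; as $m-1\ge p-1$, the inductive hypothesis turns $g_1,g_2$ into $\delta_{p-1}$-values, so $[g_1,g_2]$ is a $\delta_p$-value.

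Now for the lemma itself. If $n=1$ then $w=x_1$, and every element of $G$ --- in particular every $\delta_1$-value --- is a $w$-value. If $n\ge 2$, write $w=[u,v]$, where $u$ and $v$ are multilinear commutators on $p$ and $q$ variables respectively, with $p+q=n$ and $p,q\ge 1$; in particular $p,q\le n-1$. Let $g$ be an arbitrary $\delta_n$-value. By the definition of $\delta_n$ we may write $g=[A,B]$ with $A$ and $B$ both $\delta_{n-1}$-values. By the Claim (applied with $m=n-1$, first with $p$ and then with $q$), $A$ is a $\delta_p$-value and $B$ is a $\delta_q$-value; and by the inductive hypothesis applied to $u$ and to $v$, it follows that $A$ is a $u$-value and $B$ is a $v$-value. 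Hence $g=[A,B]$ is a $w$-value, as required.

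The argument is entirely routine, so there is no serious obstacle; the one point that must be seen is the reduction in the Claim. Without it the induction on the bracketing structure of $w$ does not close: the two halves $u,v$ of $w$ need not be balanced derived words of the appropriate depth, so one cannot directly match the two $\delta_{n-1}$-factors of $\delta_n$ against them, whereas re-reading a $\delta_{n-1}$-value as a $\delta_p$-value (or $\delta_q$-value) removes exactly this mismatch.
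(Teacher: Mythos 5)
Your proof is correct. The paper does not prove this lemma at all --- it is quoted as a well-known fact with a reference to Lemma 4.1 of \cite{S2} --- and your argument (induction on the number of variables of $w$, using the decomposition $w=[u,v]$ into multilinear commutators on disjoint variables, together with the auxiliary claim that every $\delta_m$-value is a $\delta_p$-value whenever $m\ge p$) is exactly the standard proof of this statement; the auxiliary claim is indeed the point one must not skip, since the two halves of $w$ need not have balanced weights.
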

 
If $A_1,\dots,A_n$ are subsets of a group $G$, we write
$$\X_w(A_1,\dots,A_n)$$ to denote the set of all $w$-values $w(a_1,\dots,a_n)$ with $a_i\in A_i$. Moreover, we write $w(A_1, \dots , A_n)$ for the subgroup $\langle\X_w(A_1,\dots,A_n)\rangle$. Note that if every $A_i$ is a normal subgroup of $G$, then $w(A_1, \dots , A_n)$ is normal in $G$.
  
Let $I$ be a subset of $\{1,\dots,n\}$. 
  Suppose that we have a family $A_{i_1}, \dots , A_{i_s}$ of subsets of $G$ with indices running  over $I$ and another family 
  $B_{l_1}, \dots , B_{l_t}$ of subsets with indices  running  over $\{1, \dots ,n \} \setminus I.$ 
 We write 
 $$w_I(A_i ; B_l)$$ 
 for $w(X_1, \dots , X_n)$, where $X_k=A_k$ if $k \in I$, and $X_k=B_k$  otherwise. 
 On the other hand, whenever $a_i\in A_i$ for $i\in I$ and $b_l\in B_l$ for $l\in \{1,\dots,n\}\setminus I$, the symbol 
 $w_I(a_i;b_l)$ stands for the element $w(x_1, \dots , x_n)$, where $x_k=a_k$ if $k \in I$, and $x_k=b_k$ otherwise.

\begin{lemma}\label{zerobis}
Assume that $G$ is  a group and $A_1,\dots,A_n,H$ are normal subgroups of $G$.
Let $a_i\in A_i$ 
 and  $h_i\in H\cap A_i$ for every $i=1,\dots,n$. 
 Let   $j \in \{1,\dots,n\}$ and set  $I=\{1,\dots,n\}\setminus\{j\}$. 
 Then there exists  an element 
 $$x \in \X_w( a_1(H\cap A_1), \dots, a_n(H\cap A_n))$$
  such that 
$$w(a_1h_1,\dots,a_nh_n)= x \cdot  w_I(a_ih_i;h_j).$$         
\end{lemma}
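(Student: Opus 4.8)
The plan is to argue by induction on the number $n$ of variables of $w$, exploiting the structure of $w$ as a nested commutator together with the elementary identities $[ab,c]=[a,c]^b[b,c]$ and $[a,bc]=[a,c][a,b]^c$, and the fact that evaluation of $w$ commutes with conjugation. If $n=1$ then $w=x_1$, $j=1$, $I=\emptyset$, and the asserted equality reads $a_1h_1=x\cdot h_1$, which holds with $x=a_1\in a_1(H\cap A_1)=\X_w(a_1(H\cap A_1))$.

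Now let $n>1$ and write $w=[u,v]$ with $u,v$ multilinear commutators on disjoint sets of variables, each with fewer than $n$ variables; assume first that $x_j$ is among the variables of $u$. Set $u_0$ and $v_0$ equal to the values of $u$ and $v$ on the relevant entries $a_ih_i$, so that $w(a_1h_1,\dots,a_nh_n)=[u_0,v_0]$. Applying the inductive hypothesis to $u$ (with the same $H$ and the subgroups $A_i$ and elements $a_i,h_i$ for the variables of $u$) gives $u_0=x_u\cdot u_1$, where $u_1$ is the $u$-value obtained from $u_0$ by replacing its $j$-th entry $a_jh_j$ with $h_j$, and $x_u$ is a $u$-value all of whose entries lie in the cosets $a_i(H\cap A_i)$. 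Then
$$w(a_1h_1,\dots,a_nh_n)=[x_u u_1,v_0]=[x_u,v_0]^{u_1}\,[u_1,v_0],$$
and $[u_1,v_0]=w_I(a_ih_i;h_j)$ by construction. So it remains to check that $x:=[x_u,v_0]^{u_1}$ belongs to $\X_w(a_1(H\cap A_1),\dots,a_n(H\cap A_n))$. For this, $[x_u,v_0]$ is a $w$-value whose $i$-th entry lies in $a_i(H\cap A_i)$ for every $i$ (those entries coming from $x_u$ by the inductive hypothesis, those from $v_0$ because $a_lh_l\in a_l(H\cap A_l)$); and since $h_j\in H$ and $H$ is normal, the commutator value $u_1$ lies in $H$, whence conjugation by $u_1$ maps each coset $a_i(H\cap A_i)$ into itself — here one uses that $A_i$, $H$ and $H\cap A_i$ are all normal in $G$, so that $a_i^{u_1}=a_i[a_i,u_1]$ with $[a_i,u_1]\in H\cap A_i$ and $(H\cap A_i)^{u_1}=H\cap A_i$.

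When $x_j$ is a variable of $v$ instead, writing $v_0=x_v v_1$ in the same way and using $[a,bc]=[a,c][a,b]^c$ we obtain $w(a_1h_1,\dots,a_nh_n)=[u_0,v_1]\,[u_0,x_v]^{v_1}=w_I(a_ih_i;h_j)\cdot y$ with $y=[u_0,x_v]^{v_1}$. Here the required factor appears on the wrong side, so we rewrite $w_I\,y=\bigl(y^{w_I^{-1}}\bigr)w_I$ with $w_I:=w_I(a_ih_i;h_j)$ and take $x=y^{w_I^{-1}}=[u_0,x_v]^{v_1w_I^{-1}}$; since $v_1$ and $w_I$ are both commutator values with an entry equal to $h_j\in H$, they lie in $H$, hence so does $v_1w_I^{-1}$, and the same coset-preservation argument shows $x\in\X_w(a_1(H\cap A_1),\dots,a_n(H\cap A_n))$.

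The main thing to watch is the bookkeeping of the cosets: at each application of a commutator identity one must verify that the entries of the auxiliary $w$-value remain inside the prescribed cosets $a_i(H\cap A_i)$ after conjugation, which is exactly where the normality of $A_i$, $H$ and $H\cap A_i$ enters, together with the routine remark that a multilinear-commutator value having one argument in a normal subgroup $N$ itself lies in $N$. The only other point of care is the left/right asymmetry between the two cases, which is absorbed by the identity $w_I\,y=\bigl(y^{w_I^{-1}}\bigr)w_I$.
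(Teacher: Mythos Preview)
Your proof is correct and follows essentially the same approach as the paper's: induction on $n$, splitting into the two cases according to which factor of $w=[u,v]$ contains the variable $x_j$, applying the commutator identities $[ab,c]=[a,c]^b[b,c]$ and $[a,bc]=[a,c][a,b]^c$, and then verifying that the conjugating element lies in $H$ so that the cosets $a_i(H\cap A_i)$ are preserved. The only cosmetic difference is that the paper treats the case $x_j\in v$ first and writes the reordering step as $[y_1,h][y_1,x]^h=[y_1,x]^{h[y_1,h]^{-1}}[y_1,h]$, which is exactly your identity $w_I\,y=(y^{w_I^{-1}})w_I$ with $h=v_1$ and $[y_1,h]=w_I$.
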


\begin{proof}
The proof is by induction on the number of variables $n$ appearing in $w$.
 If $n=1$ then $w(a_1h_1)=a_1h_1$ and the statement  is trivially true.
 
So assume that $n\geq2$ and let $w=[w_1,w_2]$ where $w_1,w_1$ are multilinear commutators in $s$ and $n-s$ variables, respectively. Write
$$y=w(a_1h_1,\dots,a_nh_n)=[y_1,y_2]$$ where $y_1=w_1(a_1h_1,\dots,a_sh_s),$  and $y_2=w_2(a_{s+1}h_{s+1},\dots,a_nh_n)$.

Assume also that $j>s$. Then by induction 
$y_2=xh$, where 
$x \in \X_{w_2} (a_{s+1}(H\cap A_{s+1}),\dots,a_n (H\cap A_n))$ and 
$$h=w_2(a_{s+1}h_{s+1},\dots,a_{j-1}h_{j-1},h_j,a_{j+1}h_{j+1},\dots,a_nh_n).$$
So
$$y=[y_1,y_2]=[y_1,xh]=[y_1, h][y_1, x]^h=[y_1,x]^{h [y_1,h]^{-1}} [y_1,h].$$
Since $\tilde h=h [y_1,h]^{-1} \in H$  and  $a_i \in A_i$, clearly $[a_i,\tilde h]\in H\cap A_i$ and so 
 $$(a_i \tilde h_i)^{\tilde h}= a_i[a_i,\tilde h]h_i^{\tilde h} \in a_i ( H\cap A_i),$$ 
  for every $\tilde h_i \in H\cap A_i$ and every $i$. 
 As $x  =w_2(a_{s+1}\tilde h_{s+1},\dots,a_n\tilde h_n),$ for some $\tilde h_{i}\in H\cap A_{i},$
 it follows that 
 \begin{eqnarray*}
 [y_1,x]^{\tilde h} &=& w(a_1h_1,\dots,a_sh_s, a_{s+1} \tilde h_{s+1},\dots,a_n \tilde h_n)^{\tilde h} \\
 &=& w((a_1h_1)^{\tilde h},\dots,(a_sh_s)^{\tilde h}, (a_{s+1} \tilde h_{s+1})^{\tilde h},\dots,(a_n \tilde h_n)^{\tilde h})
\end{eqnarray*} 
 belongs to $ \X_w( a_1(H\cap A_1), \dots, a_n(H\cap A_n)),$
 as desired.

The case $1\le j\le s$ is similar.
By induction $y_1=xh$, where 
$$h=w_1(a_{1}h_{1},\dots,a_{j-1}h_{j-1},h_j,a_{j+1}h_{j+1},\dots,a_sh_s)$$ and 
$x \in \X_{w_1} (a_{1}(H\cap A_{1}),\dots,a_s (H\cap A_s)).$
So
$$
y=[y_1,y_2]=[xh,y_2]=[x,y_2]^h[h,y_2]. $$
Note that $h\in H$ and $a_i \in A_i$, therefore, as above, 
$(a_i \tilde h_i)^{ h} \in a_i ( A_i\cap H)$ for every $\tilde h_i\in H\cap A_i$ and every $i$. So $$[x,y_2]^h \in\X_w(a_1(H\cap A_1),\dots, a_n(H\cap A_n))$$ and the result follows.
\end{proof}

\begin{lemma}\label{uno} Let $H,A_1,\dots,A_n$ be normal subgroups of a group $G$. Let $V$ be a subgroup of $G$ and $g\in G$. Assume that for some elements $a_i\in A_i$, the following holds:
$$\X_w( a_1(H\cap A_1),\dots,a_n(H\cap A_n)) 
\subseteq gV.$$
Let $I$ be a proper subset  of $\{1,\dots,n\}$. 
Then 
$$w_I(a_i (H\cap A_i);H\cap A_l )\le V.$$
\end{lemma}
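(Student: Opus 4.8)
The plan is to prove Lemma \ref{uno} by induction on $n$, the number of variables of $w$, running the induction in parallel with the structural decomposition $w = [w_1, w_2]$, exactly as in Lemma \ref{zerobis}. For $n = 1$ the word $w$ has no proper subset $I$ other than the empty set, so $w_I(\,;H\cap A_1) = w(H\cap A_1) = \langle H\cap A_1\rangle$; but $\X_w(a_1(H\cap A_1)) = a_1(H\cap A_1)$, and the hypothesis $a_1(H\cap A_1)\subseteq gV$ forces $H\cap A_1\subseteq a_1^{-1}gV$, and comparing two cosets of $V$ gives $H\cap A_1\subseteq V$ after noting $a_1 \in a_1(H\cap A_1)\subseteq gV$ as well (so $a_1^{-1}g\in V$). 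Hence $\langle H\cap A_1\rangle\le V$, which is the base case.

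**The key reduction via Lemma \ref{zerobis}.** For the inductive step, the crucial observation is that it suffices to treat the case where $I = \{1,\dots,n\}\setminus\{j\}$ is obtained by deleting a single index $j$: once we know $w_{I'}(a_i(H\cap A_i); H\cap A_j)\le V$ for every such "one-deleted" set $I'$, a smaller proper subset $I\subsetneq I'$ is handled by applying the already-established one-variable case of the lemma to the word $w$ with the $j$-th slot now ranging over all of $H\cap A_j$ — more precisely, by iterating. So fix $I = \{1,\dots,n\}\setminus\{j\}$. For arbitrary $h_i\in H\cap A_i$ ($i\ne j$) and $h_j\in H\cap A_j$, Lemma \ref{zerobis} gives $w(a_1h_1,\dots,a_nh_n) = x\cdot w_I(a_ih_i; h_j)$ with $x\in\X_w(a_1(H\cap A_1),\dots,a_n(H\cap A_n))\subseteq gV$. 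Taking $h_j = 1$ gives $w(a_1h_1,\dots,a_nh_n) = x'\cdot w_I(a_ih_i;1) = x'$ (the last factor being trivial since one entry of a multilinear commutator is $1$), so $w(a_1h_1,\dots,a_nh_n)\in gV$ for all choices; but $w(a_1h_1,\dots,a_nh_n)$ also lies in $gV$ from the hypothesis directly. Now, from $x\cdot w_I(a_ih_i;h_j) = w(a_1h_1,\dots,a_nh_n)$ with both $x$ and $w(a_1h_1,\dots,a_nh_n)$ in $gV$, we get $w_I(a_ih_i;h_j) = x^{-1}w(a_1h_1,\dots,a_nh_n)\in V$ (product of an element of $V^{-1}g^{-1}\cdot gV = V$).

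**Passing from elements to the subgroup.** The previous paragraph shows that every generator $w_I(a_ih_i;h_j)$ of the subgroup $w_I(a_i(H\cap A_i); H\cap A_j)$ lies in $V$ — here we use that as $h_i$ ranges over $H\cap A_i$ and $h_j$ over $H\cap A_j$, the elements $w_I(a_ih_i;h_j)$ are exactly the elements of $\X_{w}$ with the $i$-th slot in $a_i(H\cap A_i)$ for $i\in I$ and the $j$-th slot in $H\cap A_j$, which by definition generate $w_I(a_i(H\cap A_i);H\cap A_j)$. Since $V$ is a subgroup, $w_I(a_i(H\cap A_i);H\cap A_j)\le V$, completing the one-deleted case, and hence (by the reduction) the general case. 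The main obstacle I anticipate is purely bookkeeping: making sure the reduction from a general proper subset $I$ to the one-deleted subsets is airtight — one must verify that deleting indices one at a time keeps each intermediate set proper and that the relevant coset inclusions are inherited at each stage, which is where one genuinely needs Lemma \ref{zerobis}'s precise form (the factor $x$ staying inside $\X_w$ of the same cosets) rather than a cruder estimate.
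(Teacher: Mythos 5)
Your proof is correct, and its core coincides with the paper's: for $I=\{1,\dots,n\}\setminus\{j\}$ you use Lemma \ref{zerobis} to write a generic generator $w_I(a_ih_i;h_j)$ as $x^{-1}w(a_1h_1,\dots,a_nh_n)$ with both factors in $gV$, so that the generator lies in $(gV)^{-1}(gV)=V$; this is exactly the base case of the paper's induction, which runs on $n-|I|$. Where you diverge is in how smaller proper subsets $I$ are reached. The paper applies Lemma \ref{zerobis} a second time in the inductive step, noting that the correction factor $x$ lies in $w_{I^*}(a_i(H\cap A_i);H\cap A_l)$ for $I^*=I\cup\{j\}$, a subgroup already contained in $V$ by the inductive hypothesis. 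You instead bootstrap the one-deleted case itself: having deleted $j_1$, you re-apply that case with the representative in slot $j_1$ replaced by $1\in A_{j_1}$ and with $g$ replaced by $1$, since the subgroup conclusion of one application (contained in $V=1\cdot V$) furnishes the coset hypothesis for the next application, and you delete the remaining indices one at a time. This iteration is sound precisely because the one-deleted statement was proved for arbitrary representatives $a_i\in A_i$ and arbitrary $g$, and each intermediate index set has $n-1$ elements, hence is proper; writing out one such step is all the ``bookkeeping'' that is actually needed, so the gap you flag is real but easily closed. Two cosmetic remarks: the announced induction on $n$ is never used (Lemma \ref{zerobis} already carries the structural induction on $w$, and your $n=1$ case is subsumed by the general argument with $I=\emptyset$), and the auxiliary computation with $h_j=1$ is redundant, since $w(a_1h_1,\dots,a_nh_n)\in gV$ is immediate from the hypothesis.
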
 
\begin{proof}
The proof is by  induction on $n-|I|$, 
 so first assume that $I=\{1,\dots,n \}\setminus\{j\}$ for some index $j$. 

 We will write for short $H_i=H\cap A_i$, for every $i=1, \dots ,n$. 
 
Consider $ w(g_1,  \dots , g_n)$, where $g_i \in a_i  H_i   $ for every $i\ne j$ and $g_j  \in H_j$. By Lemma \ref{zerobis} we have $$w(g_1,  \dots ,g_{j-1},a_jg_j,g_{j+1},\dots,g_n)=xw(g_1,\dots,g_n),$$
for some $x\in \X_w( a_1  H_1   ,\dots,a_n  H_n   )\subseteq gV$. As  
$$w(g_1,\dots,a_jg_j,\dots,g_n)\in\X_w(a_1  H_1   ,\dots,a_n  H_n   ),$$ it follows that $w(g_1,\dots,g_n)\in V$. Since $V$ is subgroup, we deduce that $w_I(a_i   H_i   ;H_l )\le V$ and this concludes the case $|I |=n-1$.  

Now assume that $|I|\le n-2$ and let $I^*=I\cup\{j\}$ for some $j\notin I$.
Consider $w(g_1,\dots,g_n)$, where $g_i\in H_i$ for every $i\in I$ and  $g_i \in a_i  H_i   $ for every $i\not\in I$. Then the element $w(g_1,\dots,g_{j-1},a_jg_j,g_{j+1},\dots,g_n)$ belongs to $ w_{I*}(a_i  H_i   ;H_l )$. By Lemma \ref{zerobis} we have $$w(g_1,\dots,g_{j-1},a_jg_j,g_{j+1},\dots,g_n)=xw(g_1,\dots,g_n),$$  for some 
$$x\in \X_w( g_1  H_1   ,\dots ,g_{j-1}  H_{j-1}   ,a_j  H_j   ,g_{j+1}  H_{j+1}   , \dots, g_n  H_n   ).$$ In particular $x \in  w_{I*}(a_i   H_i   ;H_l )$. 
Since, by induction,
  $ w_{I*}(a_i   H_i   ;H_l )
\le V$, it follows that $ w(g_1,\dots,g_n)\in V$, as we wanted. The proof is complete.
\end{proof} 

By applying the previous lemma with $I=\emptyset$ and $A_i=G$ for each $i$,   we obtain the following corollary. 

\begin{corollary}\label{w(H)}
Let $G$ be a group, $H$ and $V$ subgroups of $G$, and $g\in G$. Assume that $H$ is normal and $$\X_w(a_1H,\dots,a_nH) \subseteq gV$$ for some elements
$a_1,\dots,a_n\in G$. Then $w(H)\subseteq V$.
\end{corollary}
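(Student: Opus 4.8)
The plan is to deduce Corollary~\ref{w(H)} directly from Lemma~\ref{uno} by specializing the data. Take $I=\emptyset$ and $A_i=G$ for every $i$; then $H\cap A_i=H$ for all $i$, so the hypothesis $\X_w(a_1H,\dots,a_nH)\subseteq gV$ is exactly the hypothesis of Lemma~\ref{uno} in this case. Since $\emptyset$ is a proper subset of $\{1,\dots,n\}$ (as $n\ge 1$), the lemma applies and yields
\[
w_\emptyset(a_i (H\cap A_i);H\cap A_l)=w_\emptyset(\,;H,\dots,H)\le V.
\]
Now I would unwind the notation $w_I(A_i;B_l)$: when $I=\emptyset$ there are no ``$A$''-slots, every slot is filled by the corresponding $B_l=H$, so $w_\emptyset(\,;H,\dots,H)=w(H,\dots,H)=\langle\X_w(H,\dots,H)\rangle$. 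Since $H$ is a normal subgroup of $G$, $\X_w(H,\dots,H)$ is precisely the set of $w$-values in $H$, hence $\langle\X_w(H,\dots,H)\rangle=w(H)$ (verbal subgroups of $w$ behave well here because $w$ is a word and $H$ is a subgroup). Therefore $w(H)\le V$, which is the claim; closure of $w(H)$ in the profinite setting is automatic since $V$ is a (closed) subgroup and $w(H)$ is generated by elements of $V$.

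There is essentially no obstacle here — the content is all in Lemma~\ref{uno}, and the corollary is a routine unpacking of definitions. The only point requiring a moment of care is checking that the degenerate index set $I=\emptyset$ is genuinely covered by the inductive statement of Lemma~\ref{uno}: the base case of that induction is $|I|=n-1$, and the induction decreases $n-|I|$, so $I=\emptyset$ is reached after finitely many steps (using that $A_i=G$ makes all the intermediate normal-subgroup hypotheses trivially satisfied). I would state this explicitly in one line and then conclude.
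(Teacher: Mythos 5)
Your proposal is correct and is exactly the paper's argument: the authors obtain the corollary precisely by applying Lemma~\ref{uno} with $I=\emptyset$ and $A_i=G$ for each $i$, and the unwinding $w_\emptyset(\,;H,\dots,H)=\langle\X_w(H,\dots,H)\rangle=w(H)$ together with the observation that $\emptyset$ is a proper subset handled by the induction on $n-|I|$ is just the routine bookkeeping you describe.
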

The next lemma is Lemma 4.1 in \cite{DMS-revised}.

\begin{lemma}\label{M} 
Let $A_1,\dots,A_n$ and $H$ be normal subgroups of  a group $G$. 
Let $I$ be a subset of $\{1, \dots ,n \}$. Assume that for every proper subset $J$ of $I$ 
\[ w_J (A_i; H\cap A_l)=1.\]
Suppose we are given elements  $g_i \in A_i$ with $i \in I$ and  elements $h_k \in H\cap A_k$ with $k \in \{1, \dots, n\}$. 
 Then we have 
\[w_I(g_ih_i; h_l)=w_I(g_i;h_l).\] 
\end{lemma}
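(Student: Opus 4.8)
The plan is to argue by induction on the weight $n$ of $w$, using the recursive structure $w=[w_1,w_2]$ of a multilinear commutator. If $n=1$ then $w=x_1$; when $I=\{1\}$ the empty set is a proper subset of $I$, so the hypothesis forces $H\cap A_1=w_\emptyset(A_i;H\cap A_l)=1$, whence $h_1=1$ and the identity $g_1h_1=g_1$ is trivial, while the case $I=\emptyset$ is immediate.

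For the inductive step I would write $w=[w_1,w_2]$ with $w_1,w_2$ multilinear commutators on the disjoint variable sets $\{1,\dots,s\}$ and $\{s+1,\dots,n\}$, and put $I_1=I\cap\{1,\dots,s\}$, $I_2=I\cap\{s+1,\dots,n\}$. Then $w_I(g_ih_i;h_l)=[P,Q]$ and $w_I(g_i;h_l)=[P_0,Q_0]$, where $P=(w_1)_{I_1}(g_ih_i;h_l)$, $Q=(w_2)_{I_2}(g_ih_i;h_l)$, and $P_0,Q_0$ are the analogous values of $w_1,w_2$ obtained by deleting the factors $h_i$ with $i\in I$; the goal is $[P,Q]=[P_0,Q_0]$. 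Set $N_1=\langle (w_1)_{J_1}(A_i;H\cap A_l)\mid J_1\subsetneq I_1\rangle$ and $N_2=\langle (w_2)_{J_2}(A_i;H\cap A_l)\mid J_2\subsetneq I_2\rangle$; these are normal in $G$, and since any value of $w_1$ (resp.\ $w_2$) having an entry in some $H\cap A_k$ lies in $H$, we get $N_1,N_2\le H$. Passing to $\ol G=G/N_1N_2$, the inclusion $N_1N_2\le H$ yields $\ol H\cap\ol A_k=\ol{H\cap A_k}$ for all $k$, so the hypotheses of the lemma for the pairs $(w_1,I_1)$ and $(w_2,I_2)$ hold in $\ol G$; by the inductive hypothesis $\ol P=\ol{P_0}$ and $\ol Q=\ol{Q_0}$, i.e.\ $P=P_0r$ and $Q=Q_0s$ with $r,s\in N_1N_2$.

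It then remains to check $[P_0r,Q_0s]=[P_0,Q_0]$. I would expand this commutator by the identities $[ab,c]=[a,c]^b[b,c]$ and $[a,bc]=[a,c][a,b]^c$ and feed in the following vanishings, all coming from the hypothesis because the index sets $J_1\cup I_2$, $I_1\cup J_2$, $J_1\cup J_2$ are proper subsets of $I$:
$$[N_1,(w_2)_{I_2}(A_i;H\cap A_l)]=[N_2,(w_1)_{I_1}(A_i;H\cap A_l)]=[N_1,N_2]=1,$$
together with the fact — deduced from these via the three–subgroups lemma / the Hall–Witt identity — that $[P_0,Q_0]=w_I(g_i;h_l)$ centralizes both $N_1$ and $N_2$. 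With these in hand the expanded commutator collapses to $[P_0,Q_0]$, closing the induction.

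The hard part is precisely this last step. The hypothesis on $(w,I)$ does \emph{not} pass down to the hypotheses on $(w_1,I_1)$ and $(w_2,I_2)$ — it only tells us that commutators such as $[(w_1)_{J_1}(A_i;H\cap A_l),(w_2)_{I_2}(A_i;H\cap A_l)]$ are trivial — so one cannot simply say ``$P=P_0$ and $Q=Q_0$ by induction'': the correction terms $r,s$ must be carried through the outer commutator, and one must verify, by careful three–subgroups bookkeeping, that every commutator met between a correction term and $P_0$, $Q_0$ or the other correction term is a product of values $w_J(A_i;H\cap A_l)$ with $J\subsetneq I$, hence trivial. Keeping track of which normal subgroup — and in particular of the fact that $r,s\in H$, so that commutators with them stay inside $H$ — is the delicate point; this is also where, in a more computational treatment, Lemma \ref{zerobis} would be used to organise the expansion of $w_I(g_ih_i;h_l)$ directly.
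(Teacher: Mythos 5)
The paper itself does not prove this lemma (it is quoted as Lemma 4.1 of \cite{DMS-revised}), so I assess your argument on its own terms. Your skeleton is the natural one: induct on $w=[w_1,w_2]$, introduce $N_1,N_2$ generated by the ``lower'' values of $w_1,w_2$, observe $N_1,N_2\le H$ and use the modular law to transfer the hypothesis to a quotient; the base case, the inclusion $N_1,N_2\le H$, and the vanishings $[N_1,W_2]=[N_2,W_1]=[N_1,N_2]=1$ (with $W_1=(w_1)_{I_1}(A_i;H\cap A_l)$, $W_2=(w_2)_{I_2}(A_i;H\cap A_l)$) are all correct. The genuine gap is the final collapse. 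Working modulo $N_1N_2$ only gives $r,s\in N_1N_2$, and the relations you list do not control the ``same-side'' commutators that then appear: writing $r=uz$ with $u\in N_1$, $z\in W_1\cap N_2$ (and similarly $s=u'z'$ with $z'\in W_2\cap N_1$), the expansion leaves terms such as $[z,Q_0]\in[N_2,W_2]$ and $[z',P_0]\in[N_1,W_1]$, which are not of the form $w_J(A_i;H\cap A_l)$ with $J\subsetneq I$ and are not reached by the hypothesis or by a three-subgroups argument. Indeed the facts you invoke cannot by themselves imply the collapse: take $G=S_3$, $W_2=G$, $W_1=N_2=A_3$, $N_1=1$, $P_0=s=1$, $Q_0$ a transposition, $r\in A_3$ nontrivial; then $r,s\in N_1N_2$ with $r\in W_1$, $s\in W_2$, all your listed commutation relations hold and $[P_0,Q_0]=1$ centralizes $N_1N_2$, yet $[P_0r,Q_0s]=[r,Q_0]\neq 1=[P_0,Q_0]$. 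So ``careful bookkeeping'' cannot close this step as stated.

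The repair is small and keeps your structure: apply the inductive hypothesis to $w_1$ in $G/N_1$ and to $w_2$ in $G/N_2$ separately (the hypotheses hold there by exactly your Dedekind argument, since $N_1\le H$ and $N_2\le H$, and every element of $\bar A_i$, $\overline{H\cap A_l}$ lifts to $A_i$, $H\cap A_l$). This yields the sharper information $r\in N_1$ and $s\in N_2$. Then $r$ commutes with $Q_0$ and with $s$, $s$ commutes with $P_0$, and both commute with $[P_0,Q_0]$: for instance $[P_0,Q_0]=P_0^{-1}P_0^{Q_0}$ is a product of two elements of the normal subgroup $W_1$, hence is centralized by $s\in N_2$, and symmetrically for $r$. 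A direct rearrangement now gives $[P_0r,Q_0s]=[P_0,Q_0]^{sr}=[P_0,Q_0]$, with no need for the Hall--Witt identity. With this one change your induction closes correctly.
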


We will now introduce some more notation to handle some particular properties of multilinear commutators. We denote by $\mathbf{I}$ the set of $n$-tuples $(i_1,\dots,i_n)$, where all entries $i_k$ are non-negative integers. We will view $\mathbf{I}$ as a partially ordered set with the partial order given by the rule that $$(i_1,\dots,i_n)\leq(j_1,\dots,j_n)$$ if and only if $i_1\leq j_1,\dots,i_n\leq j_n$.

Given 
 $\mathbf{i}=(i_1,\ldots,i_n) \in \mathbf{I}$, we write
\[
w(\mathbf{i})=w(G^{(i_1)},\ldots,G^{(i_n)})
\]
 for  the subgroup generated by the $w$-values $w(g_1,\dots,g_n)$ with  $g_{j} \in G^{(i_j)}$. 
Further, let 
\[
w(\mathbf{i^+})=\prod w(\mathbf{j} ),
\]
 where the product is taken over all $\mathbf{j} \in \mathbf{I} $ such that $\mathbf{j}>\mathbf{i}$.

\begin{lemma}\label{lem:ab} \cite[Corollary 6]{DMS1}
 Let $w=w(x_1,\ldots,x_n)$ be a multilinear commutator\  and let $\mathbf{i} \in \mathbf{I}$. If $ w(\mathbf{i^+})=1$, then $ w(\mathbf{i})$ 
 is abelian.  
\end{lemma}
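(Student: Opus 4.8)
The plan is to establish, in any group, the inclusion $[w(\mathbf{i}),w(\mathbf{i})]\le w(\mathbf{i^+})$; the Lemma then follows at once from the hypothesis $w(\mathbf{i^+})=1$. Since $w(\mathbf{i})$ is generated by $\X_w(G^{(i_1)},\dots,G^{(i_n)})$ and a subgroup generated by a set of pairwise commuting elements is abelian, it suffices to show that $[u,v]\in w(\mathbf{i^+})$ for any two $w$-values $u=w(a_1,\dots,a_n)$ and $v=w(b_1,\dots,b_n)$ with $a_k,b_k\in G^{(i_k)}$ for all $k$.

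First I would record a simple fact: such a $w$-value $w(b_1,\dots,b_n)$ lies in $G^{(i_k)}$ for \emph{every} $k$. This is proved by induction on $n$: if $w=[w_1,w_2]$ with the variable $x_k$ occurring in $w_1$, then $w_1$ evaluated at the relevant $b$'s lies in $G^{(i_k)}$ by induction, and since $G^{(i_k)}$ is normal, $w(b_1,\dots,b_n)=[w_1(\cdots),w_2(\cdots)]\in[G^{(i_k)},G]\le G^{(i_k)}$. In particular, setting $m=\max_k i_k$, we get $v\in\bigcap_k G^{(i_k)}=G^{(m)}$.

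The combinatorial heart is a collection identity: for normal subgroups $N_1,\dots,N_n,M$ of a group $G$, any $a_k\in N_k$ and any $g\in M$,
\[
[w(a_1,\dots,a_n),g]\ \in\ \prod_{k=1}^{n} w\bigl(N_1,\dots,N_{k-1},[N_k,M],N_{k+1},\dots,N_n\bigr).
\]
I would prove this by induction on the number of brackets in $w$. For $w=x_1$ it is the triviality $[a_1,g]\in[N_1,M]$. For $w=[w_1,w_2]$ with $w_1$ on $x_1,\dots,x_s$, put $P=w_1(N_1,\dots,N_s)$ and $Q=w_2(N_{s+1},\dots,N_n)$, both normal; the three subgroups lemma gives $[[P,Q],M]\le[P,[Q,M]]\,[[P,M],Q]$, and one feeds the inductive hypothesis for $w_2$ into $[Q,M]$ and for $w_1$ into $[P,M]$, using that commutators distribute over products of normal subgroups and that $[w_1(N_1,\dots,N_s),w_2(N_{s+1},\dots,N_n)]=w(N_1,\dots,N_n)$ whenever the $N_i$ are normal.

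Finally, apply the identity with $N_k=G^{(i_k)}$, $M=G^{(m)}$ and $g=v$ (permissible since $v\in G^{(m)}$). As $G^{(m)}\le G^{(i_k)}$ for each $k$, we have $[N_k,M]\le[G^{(i_k)},G^{(i_k)}]=G^{(i_k+1)}$, so each factor on the right is contained in $w(G^{(i_1)},\dots,G^{(i_k+1)},\dots,G^{(i_n)})$, which is one of the subgroups whose product defines $w(\mathbf{i^+})$. Hence $[u,v]=[w(a_1,\dots,a_n),v]\in w(\mathbf{i^+})$, as required. The one genuinely delicate point is the collection identity: carrying the induction on the bracket structure of $w$ while keeping track of normality and of the distributivity of commutators over the iterated products is exactly the sort of multilinear-commutator bookkeeping alluded to in the introduction; the rest is formal.
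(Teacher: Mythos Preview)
The paper does not prove this lemma at all: it is quoted verbatim from \cite[Corollary~6]{DMS1}, with no argument given. So there is no ``paper's proof'' to compare against here.

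Your argument is essentially the standard one and is correct in outline. The key inclusion $[w(\mathbf{i}),w(\mathbf{i})]\le w(\mathbf{i^+})$ does follow from the subgroup-level collection formula
\[
[w(N_1,\dots,N_n),M]\ \le\ \prod_{k=1}^{n} w(N_1,\dots,[N_k,M],\dots,N_n)
\]
for normal $N_1,\dots,N_n,M$, applied with $N_k=G^{(i_k)}$ and $M=G^{(m)}$. One small wrinkle: you state the collection identity at the \emph{element} level (for a single $w(a_1,\dots,a_n)$ and a single $g$), but in the inductive step you immediately use it at the \emph{subgroup} level, feeding $[Q,M]$ rather than a single commutator $[w_2(\cdots),g]$. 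Passing from one to the other is routine---$Q$ is generated by $w_2$-values, $[q_1q_2,g]=[q_1,g]^{q_2}[q_2,g]$, and all the target subgroups are normal---but it is worth saying so explicitly. Likewise the equality $[w_1(N_1,\dots,N_s),w_2(N_{s+1},\dots,N_n)]=w(N_1,\dots,N_n)$ for normal $N_i$ is the standard fact that for a multilinear commutator the verbal subgroup on normal arguments coincides with the iterated commutator subgroup; you invoke it without proof, which is fine, but it is the place where the ``multilinear'' hypothesis is genuinely used.
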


The following lemma is Proposition 7 in \cite{DMS1}.
  \begin{lemma}\label{pow_old}
Let $\mathbf{i}=(i_1,\ldots,i_n) \in \mathbf{I}$ and suppose that $w(\mathbf{i^+})=1$. 
If  $a_j\in G^{(i_j)}$ for $j = 1,\dots, n$, and   $b_s\in  G^{(i_s)}$ 
 then 
$$w(a_1 ,\dots, a_{s-1}, b_s a_s , a_{s+1} ,\dots, a_k )$$
$$\quad=w(\tilde a_1 ,\dots, \tilde a_{s-1}, b_s ,\tilde  a_{s+1} ,\dots,\tilde  a_k )w(a_1 ,\dots, a_{s-1}, a_s , a_{s+1} ,\dots, a_k ),$$
where $\tilde a_j$ is a conjugate of $a_j$ and moreover $\tilde a_j=a_j$ if $i_j\le i_s$ .
 \end{lemma}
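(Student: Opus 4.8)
The plan is to prove the identity by structural induction on the multilinear commutator $w$, the case $w=x_1$ being immediate. It is convenient first to relax the statement: I would prove that for an \emph{arbitrary} $\mathbf{i}\in\mathbf{I}$ the displayed identity holds modulo $w(\mathbf{i^+})$ (with each $\tilde a_j$ a conjugate of $a_j$, equal to $a_j$ once $i_j\le i_s$), so that the hypothesis $w(\mathbf{i^+})=1$ gives the asserted equality as a special case. Two features of the modular form are crucial. First, it makes the inductive hypothesis applicable to the two shorter commutators into which $w$ factors. Second, passing to the quotient $G/w(\mathbf{i^+})$ does not alter the relevant subgroups and makes $w(\mathbf{i^+})$ trivial, so Lemma \ref{lem:ab} applied there shows that $w(\mathbf{i})$ is abelian modulo $w(\mathbf{i^+})$; since the $G^{(i_k)}$ are normal, every $w$-value that will occur lies in $w(\mathbf{i})$, hence such values and their conjugates may be reordered freely modulo $w(\mathbf{i^+})$. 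I will also use repeatedly that a $w$-value whose $k$-th entry lies in $G^{(i_k)}$ for every $k$ already lies in $w(\mathbf{i^+})$ as soon as one entry lies in $G^{(i_k+1)}$, together with the derived-series arithmetic $[G^{(p)},G^{(q)}]\subseteq G^{(p+1)}$ whenever $p\le q$.

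For the inductive step write $w=[u,v]$ with $u=u(x_1,\dots,x_m)$, $v=v(x_{m+1},\dots,x_n)$, and suppose first that the expanded variable has index $s\le m$. Put $U'=u(a_1,\dots,b_sa_s,\dots,a_m)$, $U=u(a_1,\dots,a_s,\dots,a_m)$ and $V=v(a_{m+1},\dots,a_n)$, so $w(a_1,\dots,b_sa_s,\dots,a_n)=[U',V]$. The inductive hypothesis applied to $u$ gives $U'\equiv u(\tilde a_1,\dots,b_s,\dots,\tilde a_m)\,U$ modulo $u(\mathbf{i_u^+})$, where $\mathbf{i}_u=(i_1,\dots,i_m)$ and $\tilde a_j=a_j$ whenever $i_j\le i_s$. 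Commuting everything with $V$ through $[xy,z]=[x,z]^y[y,z]$ kills this error term modulo $w(\mathbf{i^+})$, since $[u(\mathbf{i_u^+}),V]\subseteq w(\mathbf{i^+})$ (each generator $[u(c_1,\dots,c_m),V]$ is the $w$-value $w(c_1,\dots,c_m,a_{m+1},\dots,a_n)$, whose type exceeds $\mathbf{i}$). What remains is $[u(\tilde a_1,\dots,b_s,\dots,\tilde a_m),V]^{U}\cdot[U,V]$: the second factor is precisely $w(a_1,\dots,a_s,\dots,a_n)$, while the first, since $u$ and $v$ are group-words, equals the $w$-value obtained from $w(\tilde a_1,\dots,b_s,\dots,\tilde a_m,a_{m+1},\dots,a_n)$ by conjugating every entry by $U$.

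To finish, note that because $s\le m$ the element $U$ lies in $G^{(d)}$ for some $d\ge i_s$, so conjugating an entry of index at most $i_s$ (each $\tilde a_j=a_j$ with $i_j\le i_s$, and $b_s$) by $U$ changes it only by a factor in $[G^{(i_j)},G^{(d)}]\subseteq G^{(i_j+1)}$, whereas an entry of index $>i_s$ is merely turned into another conjugate of itself. Splitting off each of these $G^{(i_j+1)}$-factors from its slot by one further application of the statement — legitimate because the term carrying the split-off factor then has an entry in $G^{(i_j+1)}$, hence lies in $w(\mathbf{i^+})$ — collapses the $w$-value, modulo $w(\mathbf{i^+})$, to $w(\tilde a_1,\dots,b_s,\dots,\tilde a_n)$ with $\tilde a_j=a_j$ whenever $i_j\le i_s$. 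This is the required identity. The case $s>m$ is symmetric: one expands $[U,V']$ through $[x,yz]=[x,z][x,y]^z$, discards the error because $[U,v(\mathbf{i_v^+})]\subseteq w(\mathbf{i^+})$, and then reorders the two surviving factors into the prescribed order using that $w(\mathbf{i})$ is abelian modulo $w(\mathbf{i^+})$.

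The main obstacle I anticipate is making this bookkeeping rigorous. One has to verify that every conjugation produced during the commutator manipulations, when it acts on an entry of index at most $i_s$, contributes only a correction of strictly larger type — hence negligible modulo $w(\mathbf{i^+})$ — while conjugations on the deeper entries merely accumulate in the $\tilde a_j$; and, more delicately, one must well-order the instances of the statement (for example by how deep inside the derived series the expanded element sits, using that $G^{(k)}=1$ for large $k$) so that the auxiliary splittings used above, which on the surface re-invoke the statement at the same configuration, in fact only ever call on strictly simpler cases. Everything else is a routine iteration of the two commutator identities $[xy,z]=[x,z]^y[y,z]$ and $[x,yz]=[x,z][x,y]^z$.
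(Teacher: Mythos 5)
The paper itself does not prove this lemma: it is imported verbatim from \cite{DMS1} (Proposition 7), so your argument can only be measured against that source in spirit. Your skeleton is the expected one and most of it is sound: strengthen the claim to a congruence modulo $w(\mathbf{i^+})$, induct on the structure of $w=[u,v]$, use $[xy,z]=[x,z]^y[y,z]$ and $[x,yz]=[x,z][x,y]^z$, absorb the error terms because commutators of $u(\mathbf{i}_u^+)$-elements (or $v(\mathbf{i}_v^+)$-elements) with the other half are $w$-values of type exceeding $\mathbf{i}$, note that $U=u(a_1,\dots,a_m)$ lies in $G^{(d)}$ with $d\ge i_s$ since the $G^{(i_j)}$ are normal, so the conjugation corrections $[c,U]$ of the shallow entries land in $G^{(i_j+1)}$, and finish with Lemma \ref{lem:ab} for the reordering in the case $s>m$.

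The genuine gap is the step where you remove those corrections ``by one further application of the statement.'' As you concede, this re-invokes the lemma at the same word $w$ and the same tuple $\mathbf{i}$, and your proposed termination device --- well-ordering instances by how deep the expanded element sits, ``using that $G^{(k)}=1$ for large $k$'' --- fails on two counts. First, in the generalized congruence form you chose (arbitrary $\mathbf{i}$, no hypothesis $w(\mathbf{i^+})=1$) the group need not be soluble at all; solubility only becomes available after passing to $G/w(\mathbf{i^+})$, where $w(G^{(M)},\dots,G^{(M)})=1$ for $M=\max_j i_j+1$ and Lemma \ref{lem:delta_k} forces $G^{(M+n)}\le w(\mathbf{i^+})$ --- an argument you would need to supply, not assume. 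Second, and more seriously, the measure does not decrease: the elements to be split off live in $G^{(i_j+1)}$ for the various positions $j$ with $i_j\le i_s$, and $i_j+1$ can be smaller than or equal to the depth of the element you are currently expanding; iterating the splitting regenerates instances at the same depths, so the recursion has no visible terminating parameter. The standard repair is to prove the absorption fact as a separate, genuinely weaker lemma by the same structural induction: for $e\in G^{(i_j+1)}$ and $x_k\in G^{(i_k)}$ one has $w(x_1,\dots,ex_j,\dots,x_n)\equiv w(x_1,\dots,x_n)$ modulo the normal subgroup $w(G^{(i_1)},\dots,G^{(i_j+1)},\dots,G^{(i_n)})\le w(\mathbf{i^+})$; its inductive step needs no un-conjugating (every slot of the modulus is a full normal subgroup, so conjugates of its elements stay inside), hence no circularity arises. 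With that lemma in place of your self-referential splitting, your main induction closes exactly as you describe and yields the statement; this is essentially the route taken in \cite{DMS1}.
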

 
\begin{corollary}\label{pow}
Assume that $w(\mathbf{i^+})=1$ and let $a_j\in G^{(i_j)} $ for $j=1,\dots,n$. Let $l$ be an integer. Then $w(a_1,\dots,a_n)^l=w(b_1,\dots,b_n)$ for some $b_1,\dots,b_n$ with $b_j\in G^{(i_j)}$.
\end{corollary}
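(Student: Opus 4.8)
The plan is to fix an index $s$ for which $i_s$ is maximal among $i_1,\dots,i_n$, and to observe that for such a choice Lemma \ref{pow_old} simplifies dramatically. Indeed, since $i_j\le i_s$ for every $j$, all the conjugates $\tilde a_j$ occurring in that lemma coincide with the original $a_j$. Thus, freezing the elements $a_1,\dots,a_{s-1},a_{s+1},\dots,a_n$, Lemma \ref{pow_old} tells us that for all $x,y\in G^{(i_s)}$
\[
w(a_1,\dots,a_{s-1},yx,a_{s+1},\dots,a_n)=w(a_1,\dots,a_{s-1},y,a_{s+1},\dots,a_n)\cdot w(a_1,\dots,a_{s-1},x,a_{s+1},\dots,a_n).
\]

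Next I would introduce the map $\phi\colon G^{(i_s)}\to G$ defined by $\phi(x)=w(a_1,\dots,a_{s-1},x,a_{s+1},\dots,a_n)$. The displayed identity, read with $y=u$ and $x=v$ for arbitrary $u,v\in G^{(i_s)}$, is exactly the statement that $\phi$ is a group homomorphism; its image lies in the abelian subgroup $w(\mathbf{i})$ by Lemma \ref{lem:ab}, although this fact is not even needed. Being a homomorphism, $\phi$ satisfies $\phi(g^l)=\phi(g)^l$ for every $g\in G^{(i_s)}$ and every integer $l$.

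Applying this with $g=a_s$ yields $w(a_1,\dots,a_{s-1},a_s^l,a_{s+1},\dots,a_n)=w(a_1,\dots,a_n)^l$. Hence, putting $b_j=a_j$ for $j\ne s$ and $b_s=a_s^l\in G^{(i_s)}$, we get $w(b_1,\dots,b_n)=w(a_1,\dots,a_n)^l$ with $b_j\in G^{(i_j)}$ for all $j$, which is the assertion. Note that this treats negative and zero exponents uniformly, with no separate cases. The only subtlety in the argument is the opening observation — that choosing $s$ with $i_s$ maximal forces every $\tilde a_j=a_j$ in Lemma \ref{pow_old}, which is what turns that lemma into a clean homomorphism property; once this is in place the remainder is routine.
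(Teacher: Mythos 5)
Your proposal is correct and follows essentially the same route as the paper: choose $s$ with $i_s$ maximal so that all the conjugates $\tilde a_j$ in Lemma \ref{pow_old} equal $a_j$, and then deduce $w(a_1,\dots,a_s^l,\dots,a_n)=w(a_1,\dots,a_n)^l$. Your homomorphism reformulation of that multiplicativity is just a cleaner packaging of the paper's ``it follows that'' step, and it correctly covers negative and zero exponents.
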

\begin{proof}
 Let $i_s$ be maximal among all $i_j$'s, with $j = 1,\dots, k$. 
 Note that by Lemma \ref{pow_old} for every  $a_j\in G^{(i_j)}$, where 
 $j=1,\dots,n$, and every $b_s\in G^{(i_s)}$ we have:
 $$w(a_1 ,\dots, a_{s-1}, b_s a_s , a_{s+1} ,\dots, a_k )$$
$$\quad=w(a_1 ,\dots, a_{s-1}, b_s ,a_{s+1} ,\dots,a_k )w(a_1 ,\dots, a_{s-1}, a_s , a_{s+1} ,\dots, a_k ).$$ It follows that $$w(a_1,\dots,a_{s-1},  a_s,a_{s+1},\dots,a_k )^l=w(a_1,\dots,a_{s-1},a_s^l,a_{s+1},\dots,a_k )$$ for every integer $l$. This proves the result.
\end{proof}
Recall that an element of a group $G$ is called an $FC$-element if it has only finitely many conjugates in $G$. The next result is Lemma 2.7 in \cite{DMS-nilpotent}.  

 \begin{lemma}\label{FC}
Let $G=\langle H, a_1,\dots,a_s\rangle$ be a profinite group, where $H$ is an open abelian normal 
subgroup and  $a_1, \dots, a_s$ are FC-elements. Then $G'$ is finite.
 \end{lemma}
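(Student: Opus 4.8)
The plan is to show that the centre $Z(G)$ is open in $G$ and then to quote Schur's theorem. The first step is to upgrade the $FC$-hypothesis on the $a_i$ to the statement that $C_G(a_i)$ is open for each $i$. Since $a_i$ has only finitely many conjugates, the orbit $a_i^G$ is finite; as the conjugation map $g\mapsto a_i^g$ is continuous, $C_G(a_i)$ is the preimage of the singleton $\{a_i\}$, hence a closed subgroup, and its index equals $|a_i^G|<\infty$. A closed subgroup of finite index in a profinite group is open, so each $C_G(a_i)$ is open.

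Next I would put $K=H\cap C_G(a_1)\cap\cdots\cap C_G(a_s)$. Being an intersection of finitely many open subgroups, $K$ is open, hence of finite index in $G$. I claim $K\le Z(G)$: on one hand $K\le H$ and $H$ is abelian, so $K$ centralizes $H$; on the other hand $K$ centralizes each $a_i$ by construction. Hence $C_G(K)$ is a (closed) subgroup containing $H$ and all of $a_1,\dots,a_s$, and since $G=\langle H,a_1,\dots,a_s\rangle$ we get $C_G(K)=G$, i.e.\ $K\le Z(G)$. In particular $Z(G)$ has finite index in $G$. By Schur's theorem the abstract commutator subgroup $[G,G]$ is then finite; a finite subset of a Hausdorff group is closed, so $G'=\ol{[G,G]}=[G,G]$ is finite, as desired.

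I do not foresee a genuine difficulty in this argument; the only two steps worth spelling out are the passage from ``$FC$-element'' to ``open centralizer'' (which uses continuity of conjugation together with the fact that closed finite-index subgroups of profinite groups are open) and the appeal to Schur's theorem, which is applied to the underlying abstract group and whose conclusion is automatically a statement about the topological group because the commutator subgroup turns out to be finite, hence closed.
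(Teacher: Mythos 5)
Your proof is correct: the passage from the FC-hypothesis to open centralizers (closed plus finite index gives open), the observation that $K=H\cap C_G(a_1)\cap\cdots\cap C_G(a_s)$ is an open subgroup contained in $Z(G)$ because the closed subgroup $C_G(K)$ contains $H$ and all the $a_i$ and hence all of $G$, and the final appeal to Schur's theorem together with the fact that a finite subgroup is closed, are all valid. The paper does not prove this lemma but quotes it as Lemma 2.7 of \cite{DMS-nilpotent}, and your centre-by-finite-plus-Schur argument is essentially the standard proof of that statement, so nothing needs to be added.
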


\section{Proof of the main theorem}

Recall that $\C$ is a class of groups closed under taking subgroups, quotients, and such that in any group the product of finitely many normal $\C$-subgroups is again a $\C$-subgroup.
 
Throughout this section we will work under the following hypothesis:

\begin{hypothesis}\label{hyp} Let $w=w(x_1,\dots,x_n)$ be a multilinear commutator and let $G$ be a profinite group in which the set of $w$-values is contained in a union of countably many cosets $\g_iG_i$ of
subgroups $G_i$, where each $G_i\in\C$. 
\end{hypothesis}

\begin{lemma}\label{H} Assume Hypothesis \ref{hyp}. Then $G$ contains an open normal subgroup $H$ such that $w(H)$ is in $\C$.
\end{lemma}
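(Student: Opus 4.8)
The plan is to exploit Baire's category theorem to extract an open subgroup on which the covering behaves well, and then feed this into the verbal machinery of Section \ref{sec:prelim}. First I would apply Baire's category theorem: since $G$ is a profinite (hence locally compact Hausdorff) group and the set of $w$-values, and thus $G^{(n)}$ by Lemma \ref{lem:delta_k}, is covered by countably many cosets $\g_iG_i$, at least one coset $\g_jG_j$ has non-empty interior in its closure, so $G_j$ (or rather its closure, but $\C$-subgroups can be taken closed after passing to closures, since $\C$ is subgroup- and quotient-closed and the relevant arguments are topological) is open in $G$. Actually the cleanest route is: one of the sets $\g_iG_i\cap \overline{\X_w(G,\dots,G)}$ has interior, forcing $G_i$ to be open; set $H_0=G_i$, an open $\C$-subgroup, though $H_0$ need not be normal.

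Next I would want to replace $H_0$ by a normal open subgroup $H$ with $w(H)\in\C$. The natural candidate is the normal core of $H_0$, or better, an open normal subgroup $H\le H_0$: since $\C$ is closed under taking subgroups, $H\in\C$, but this does not yet give $w(H)\in\C$ — we need $w(H)\le H$, which fails in general since $w(H)\le H_0$ only. So the argument must be subtler: I would use Corollary \ref{w(H)}. The idea is to find an open normal subgroup $H$ and elements $a_1,\dots,a_n$ such that $\X_w(a_1H,\dots,a_nH)$ lands in a single coset $gV$ where $V\in\C$; then Corollary \ref{w(H)} gives $w(H)\le V$, and since $\C$ is subgroup-closed, $w(H)\in\C$. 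To produce such a configuration, I would argue by a counting/pigeonhole argument over the countably many cosets: the products $a_1H\times\cdots\times a_nH$ for $a_i$ ranging over coset representatives of $H$ in $G$ partition the relevant $w$-values, and since there are only countably many covering cosets $\g_iG_i$, if $H$ is small enough one of the finitely many "boxes" $\X_w(a_1H,\dots,a_nH)$ is forced into finitely many — and then, by a further Baire-type or intersection argument within that box, into a single — coset $\g_jG_j$ with $G_j$ open. Here one takes $V$ to be that open $\C$-subgroup $G_j$.

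The main obstacle I expect is precisely the step of forcing a full "box" $\X_w(a_1H,\dots,a_nH)$ into a single coset of a $\C$-subgroup, rather than just getting some individual $w$-value or some coset to be open. Baire's theorem only hands us one open covering subgroup; to go from "a coset has interior" to "an entire coset-box of $w$-values sits in one $\C$-coset" requires combining the local structure (interior point) with the multilinearity identities of Lemma \ref{zerobis} and Lemma \ref{uno}, which let one "spread" the containment from a small neighbourhood to a whole product of cosets $a_iH$. I would set $H$ to be the intersection of $G_j$ with a suitable open normal subgroup contained in the interior witness, then run Lemma \ref{uno} (with $I=\emptyset$) to conclude $w(H)\le G_j$. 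The bookkeeping — choosing $H$ open and normal, simultaneously contained in $G_j$ and realizing the interior condition, and verifying the hypotheses of Corollary \ref{w(H)} — is the delicate part, but it is the expected analogue of the corresponding step in \cite{DMS-nilpotent, DMS-revised}.
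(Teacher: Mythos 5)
Your overall target is the right one---find an open normal $H$, elements $a_1,\dots,a_n$ and a single coset $gV$ with $V\in\C$ such that $\X_w(a_1H,\dots,a_nH)\subseteq gV$, then invoke Corollary \ref{w(H)} and subgroup-closure of $\C$---but the step that actually produces this configuration is missing, and the two devices you propose in its place do not work. First, applying Baire to the set of $w$-values (or to its closure) only gives a coset $\g_iG_i$ with non-empty interior \emph{relative to that set}; this does not force $G_i$ to be open in $G$, since the set of $w$-values may be very thin in $G$ (e.g.\ contained in a small verbal subgroup). Relatedly, neither $w(G)$ nor $G^{(n)}$ is covered by the cosets $\g_iG_i$: Lemma \ref{lem:delta_k} only places the $\delta_n$-\emph{values} among the $w$-values, not the subgroup they generate, so Baire cannot be applied to $G^{(n)}$ as a group covered by cosets. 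In fact no openness of any $G_j$ is needed or available in this lemma. Second, the pigeonhole step (``if $H$ is small enough, one box $\X_w(a_1H,\dots,a_nH)$ is forced into finitely many, then into one, coset'') has no substance: a countable covering does not reduce to a finite one on a sub-box just by shrinking $H$, and ``a further Baire-type argument within that box'' is exactly the point that needs an argument.

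The correct (and much shorter) way to get the configuration, which is what the paper does, is to apply Baire in the product space rather than in $G$: the sets $S_i=\{(g_1,\dots,g_n)\in G\times\cdots\times G \mid w(g_1,\dots,g_n)\in\g_iG_i\}$ are closed, because the word map is continuous and each $\g_iG_i$ is closed, and they cover all of $G\times\cdots\times G$, because every tuple has its $w$-value in some coset. Hence some $S_j$ has non-empty interior in $G\times\cdots\times G$, i.e.\ it contains a box $a_1H\times\cdots\times a_nH$ for some open normal subgroup $H$ of $G$, which says precisely that $\X_w(a_1H,\dots,a_nH)\subseteq\g_jG_j$. Corollary \ref{w(H)} then gives $w(H)\le G_j$, and $w(H)\in\C$ since $\C$ is closed under taking subgroups; there is no need for $H$ to sit inside $G_j$, nor for $G_j$ to be open or normal. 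So the gap in your proposal is the absence of this pullback-to-$G^n$ application of Baire; once you add it, the rest of your outline (Corollary \ref{w(H)} plus subgroup-closure of $\C$) goes through verbatim, and the detours through openness of $G_j$ and the pigeonhole reduction can be discarded.
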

\begin{proof}
For each positive integer $i$ consider the set  \[S_i=\{ (g_1, \dots, g_n)\in G\times \dots \times G \mid  w(g_1, \dots, g_n)\in \g_iG_i \}.\]
Note that the sets $S_i$ are closed in $G\times \dots \times G$ and cover the whole group $G\times \dots \times G$. By the Baire category theorem at least one of these sets has non-empty interior. Hence, there exist an open 
normal subgroup $H$ of  $G$, elements $a_1,\dots a_n \in G$, and an integer $j$ such that $w(a_1H,\dots,a_nH)\subseteq\g_jG_j$.
By  Corollary \ref{w(H)} we have $w(H)\le G_j$, so the result follows.
\end{proof}
 
\begin{lemma}\label{H_a-multi}  Assume Hypothesis \ref{hyp} and let $a\in G$ be a $w$-value. There exists a normal open subgroup $H_a$ in $G$ such that $[H_a,a]$ is in $\C$.
\end{lemma}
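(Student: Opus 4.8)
The plan is to mimic the proof of Lemma \ref{H}, but instead of applying Baire to the sets where $w(g_1,\dots,g_n)$ lands in a fixed coset $\g_iG_i$, I would work with the element $a$ fixed as one of the arguments. Since $a$ is a $w$-value, write $a = w(b_1,\dots,b_n)$ for suitable $b_j \in G$; but more usefully, by Lemma \ref{lem:delta_k} I may as well assume $w = \delta_n$ after passing to $\delta_n$-values, so that $[H_a,a]$ being the relevant object, I want to exhibit $a$ itself (or a conjugate-closed family built from $a$) inside a coset of some $G_i$ together with a suitable open subgroup. Concretely, for each $i$ consider
\[
T_i = \{(x,g_2,\dots,g_n) \in G^{(n)} \mid [x,w(a,g_2,\dots,g_n)^{\pm 1}, \dots] \in \g_i G_i\},
\]
or more simply the closed sets $T_i = \{\,x \in G \mid [x,a] \in \g_i G_i\,\}$ together with their translates; these are closed and cover $G$ (since $[x,a]$ is a $w$-value whenever $a$ is — here one uses that $w$ is a multilinear commutator, so $[x,a]$, being $w$ evaluated with $a$ substituted appropriately, is again a $w$-value, cf. the discussion preceding Lemma \ref{lem:delta_k}). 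By Baire, some $T_i$ has nonempty interior, giving an open normal $H_a$, an element $c\in G$ and an index $j$ with $[cH_a, a] \subseteq \g_j G_j$.

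From the inclusion $[cH_a,a]\subseteq \g_j G_j$ I would extract that $[H_a,a]\subseteq G_j$ by a subgroup/coset argument analogous to Corollary \ref{w(H)}: for $h,h'\in H_a$ one has $[ch,a]$ and $[ch',a]$ both in $\g_j G_j$, and since $[ch,a] = [c,a]^h[h,a] = [c,a][c,a]^{-1}[c,a]^h[h,a]$ one can cancel the ``$[c,a]$-part'' to land differences of the form $[h,a]\cdot(\text{stuff in }G_j)$ inside $G_j$; combined with $[H_a,a]$ being generated by such commutators and $G_j$ being a subgroup, this forces $[H_a,a]\le G_j$. Since $G_j\in\C$ and $\C$ is subgroup-closed, $[H_a,a]\in\C$, and $H_a$ is normal open in $G$ as required. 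Replacing $H_a$ by $\bigcap_{g\in G}H_a^g$ (a finite intersection, hence still open) keeps $[H_a,a]\in\C$ because $\C$ is subgroup-closed, so we may take $H_a$ normal.

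The main obstacle I anticipate is making precise the claim that $[x,a]$ is a $w$-value for all $x\in G$: this is genuinely true only after the reduction to $w=\delta_n$ (or more carefully, one shows that for a multilinear commutator $w$ on $n$ variables and any $w$-value $a$, the set $\{[x,a] : x\in G\}$ consists of $w$-values, which needs the nesting structure of $w$ — writing $w=[u,v]$ and using that $a$ arises from a substitution making one side into $a$ is not automatic). The cleanest route is: by Lemma \ref{lem:delta_k} every $\delta_n$-value is a $w$-value, and $[x,a]=\delta_n(x,\,\cdot\,,\dots)$ is not literally a $\delta_n$-value either, so instead I would observe that it suffices to find, for each fixed $w$-value $a$, countably many cosets of $\C$-subgroups covering $\{[x,a]:x\in G\}$ — and the cleanest way to get that is to note $\{[x,a]:x\in G\}\subseteq \langle a^G\rangle\cdot a^{-1}$ is itself contained in a single coset of the subgroup $\langle a^G\rangle$, which reduces the problem to showing $\langle a^G\rangle$ (equivalently its relevant section) is virtually-$\C$; but that is circular. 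The honest fix, which I expect the authors use, is that $[H_a,a]$ with $a$ a $w$-value and $H_a$ contained in a coset datum forces, via Lemma \ref{uno} applied with $I$ the index set omitting the position of $a$ and $A_i=G$, the conclusion $w_I(\dots;H\cap G)\le V$, and the element $a$ sitting in the omitted slot then yields exactly $[H_a,a]\le G_j$ after one more application of the commutator identity in Lemma \ref{zerobis}. So the key technical point — and the step most likely to need care — is choosing the right index set $I$ and the right ambient normal subgroups so that Lemma \ref{uno} delivers $[H_a,a]\le G_j$ directly.
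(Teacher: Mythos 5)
Your main construction does not get off the ground: the sets $T_i=\{x\in G\mid [x,a]\in\g_iG_i\}$ are closed, but there is no reason they cover $G$, because $[x,a]$ is in general not a $w$-value --- a problem you yourself flag in your last paragraph. None of your proposed repairs closes this gap: passing to $\delta_n$ does not make $[x,a]$ a $\delta_n$-value; the inclusion of $\{[x,a]:x\in G\}$ in a coset of $\langle a^G\rangle$ is, as you admit, circular; and the final appeal to Lemmas \ref{zerobis} and \ref{uno} with ``the right index set $I$'' is left unexecuted and is not in fact how the argument goes (those lemmas are designed for cosets $a_i(H\cap A_i)$ of \emph{normal} subgroups $A_i$ in the slots, not for recovering $[H,a]$ from a single fixed $w$-value $a$). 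Moreover, even granting an inclusion $[cH_a,a]\subseteq\g_jG_j$, your cancellation step is shaky: from $[ch,a]=[c,a]^h[h,a]$ the unwanted factor is $[c,a]^h$, which lies in $(\g_jG_j)^h$, a set that varies with $h$, so it does not simply cancel against $[c,a]\in\g_jG_j$ to put $[h,a]$ inside $G_j$.

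The idea missing from your proposal, and the one the paper uses, is to apply Baire to conjugates rather than commutators. Since $w(g_1,\dots,g_n)^x=w(g_1^x,\dots,g_n^x)$, every conjugate $a^x$ of the $w$-value $a$ is again a $w$-value, so the sets $S_i=\{x\in G\mid a^x\in\g_iG_i\}$ are closed and genuinely cover $G$. Baire then yields an open normal subgroup $H$, an element $b$, and an index $j$ with $a^{hb}\in\g_jG_j$ for all $h\in H$; since $a^b$ itself lies in this coset one may take $\g_j=a^b$, whence $(a^{-1}a^h)^b=a^{-b}a^{hb}\in G_j$ for all $h\in H$, i.e. $[a,H]=[H,a]\le G_j^{b^{-1}}$, which is in $\C$ because $\C$ is closed under subgroups and conjugation is an isomorphism. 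This single change of viewpoint removes both the covering problem and the cancellation problem at once; your proposal, as it stands, has a genuine gap at exactly these two points.
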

\begin{proof} For each positive integer $i$ let 
\[S_i=\{ x \in G  \mid  a^x\in \g_iG_i\}.\]
Note that the sets $S_i$ are closed in $G $ and cover the whole group $G$. By the Baire category theorem at least one of these sets has non-empty interior. Hence, there exist an open normal subgroup $H$ 
of $G$, an element $b\in G$, and an integer $j$  such that $a^{hb}\in \g_jG_j$ for any $h\in H$. Of course we can assume that $\g_j=a^b$, so that   $a^{-b} a^{hb}\in G_j$ for every $h\in H$. Thus $a^{-1}a^h\in G_j^b$ for every $h\in H$. Hence, $[a,H]=[H,a]\le G_j^b$ is in $\C$. 
\end{proof}

Recall that $G^{(i)}$ denotes the $i$-th term of the derived series of a group $G$.
\begin{proposition}\label{delta} Assume Hypothesis \ref{hyp}. Then $G^{(2n)}$ is in $\C\F$.
\end{proposition}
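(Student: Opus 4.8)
The plan is to show that $G^{(2n)}$ has a normal $\C$-subgroup of finite index by exploiting the structure afforded by the derived-series bookkeeping $w(\mathbf{i})$, $w(\mathbf{i^+})$. By Lemma \ref{lem:delta_k}, every $\delta_n$-value is a $w$-value, so $G^{(2n)}=\delta_n(G)^{(n)}$ is generated by $\delta_n$-values, and it suffices to control the subgroups $w(\mathbf{i})$ for suitable $\mathbf{i}$. I would argue by a downward induction on the partially ordered set $\mathbf{I}$, or rather on the ``levels'' $i_1+\dots+i_n$: since $G$ is profinite and $G^{(k)}$ eventually stabilizes (it need not reach $1$, but the relevant products $w(\mathbf{j})$ with $\mathbf{j}$ large all lie inside a fixed term of the derived series), there is a top level beyond which $w(\mathbf{i^+})=1$. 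Starting from such an $\mathbf{i}$ and working downward, I will prove that each $w(\mathbf{i})$ is in $\C\F$, and then assemble the finitely many relevant $w(\mathbf{i})$ using Lemma \ref{normalclosure}.

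The base/inductive step is where the coset-covering hypothesis enters. Fix $\mathbf{i}=(i_1,\dots,i_n)$ with $w(\mathbf{i^+})=1$; then by Lemma \ref{lem:ab} the subgroup $w(\mathbf{i})$ is abelian, and by Corollary \ref{pow} every power of a generating $w$-value $w(a_1,\dots,a_n)$ (with $a_j\in G^{(i_j)}$) is again such a $w$-value. Now apply Lemma \ref{H}: there is an open normal $H$ with $w(H)\in\C$, hence $w(H\cap G^{(i_1)},\dots,H\cap G^{(i_n)})\le w(H)$ is in $\C$. Working modulo this $\C$-subgroup (which I first enlarge to its normal closure, still in $\C$ by the hypothesis on $\C$, and which is contained in $w(\mathbf{i})$), the images of the finitely many coset representatives of $H$ in $G^{(i_j)}$ generate $w(\mathbf{i})$ over a $\C$-subgroup; so it remains to see that finitely many generators contribute only a finite amount. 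For this I use Lemma \ref{H_a-multi}: each $w$-value $a$ has an open $H_a$ with $[H_a,a]\in\C$, i.e. $a$ is an ``$FC$-element modulo a $\C$-subgroup''. Intersecting over the finitely many representatives and pushing the resulting $\C$-subgroup into $w(\mathbf{i})$, I get that $w(\mathbf{i})$ is generated modulo a normal $\C$-subgroup by an open abelian normal subgroup together with finitely many $FC$-elements, and Lemma \ref{FC} then forces the remaining commutator part to be finite. Combining, $w(\mathbf{i})\in\C\F$.

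For the inductive step proper, when $w(\mathbf{i^+})\neq1$ one first passes to $G/w(\mathbf{i^+})$, where by induction $w(\mathbf{i^+})$ — being a product of the finitely many $w(\mathbf{j})$ with $\mathbf{j}>\mathbf{i}$ that are relevant, each in $\C\F$ — is in $\C\F$ by Lemma \ref{normalclosure}. Thus in the quotient the above argument applies and shows the image of $w(\mathbf{i})$ is in $\C\F$; pulling back and using Lemma \ref{normalclosure} once more (an extension of a $\C\F$-group by a normal $\C\F$-subgroup is $\C\F$) gives $w(\mathbf{i})\in\C\F$. Finally, $G^{(2n)}=\delta_n(G^{(n)})$ is generated by $\delta_n$-values of elements of $G^{(n)}$, hence lies in the product of the subgroups $w(\mathbf{i})$ over $\mathbf{i}$ with all $i_j\ge n$; only finitely many such $\mathbf{i}$ are ``minimal'' in the relevant range (the rest being absorbed into $w(\mathbf{i^+})$'s), so $G^{(2n)}$ is a product of finitely many normal $\C\F$-subgroups and hence in $\C\F$ by Lemma \ref{normalclosure}.

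The main obstacle I anticipate is the second step: carefully arranging the reduction modulo a $\C$-subgroup so that Lemma \ref{FC} is genuinely applicable — that is, producing a single open abelian normal subgroup and finitely many honest $FC$-elements after quotienting, rather than a tangle of $\C$-subgroups attached to different generators. This requires simultaneously intersecting the open subgroups $H$, $H_a$ (over finitely many representatives $a$), and the normalizers involved, and then verifying that all the various $\C$-subgroups produced can be collected into one normal $\C$-subgroup of $w(\mathbf{i})$ using the closure properties of $\C$ and Lemma \ref{normalclosure} — bookkeeping that is routine in spirit but delicate to get exactly right, especially since the $FC$ property only holds modulo a $\C$-subgroup and one must track that Corollary \ref{pow} keeps powers inside the same coset structure.
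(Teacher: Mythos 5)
Your argument has a genuine gap at its foundation: the downward induction over $\mathbf{I}$ has no valid starting point. You claim that ``there is a top level beyond which $w(\mathbf{i^+})=1$'', but the poset $\mathbf{I}$ has no maximal elements, and if $w(\mathbf{i^+})=1$ for some $\mathbf{i}$ then, taking $m=1+\max_k i_k$ and $\mathbf{j}=(m,\dots,m)>\mathbf{i}$, Lemma \ref{lem:delta_k} applied to the group $G^{(m)}$ gives $G^{(m+n)}=\delta_n(G^{(m)})\le w(\mathbf{j})\le w(\mathbf{i^+})=1$, i.e.\ $G$ is soluble. Hypothesis \ref{hyp} is perfectly compatible with non-soluble $G$ (for instance $G$ a finite nonabelian simple group with $\C$ the class of finite groups), so in general no such $\mathbf{i}$ exists and the derived series does not stabilize in the way you need. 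This is exactly why, in the paper, Proposition \ref{delta} is proved \emph{first} and by other means: only after $G^{(2n)}\in\C\F$ is known can one set $T=G^{(2n)}w(H)$, note that $G/T$ is soluble, and conclude that only finitely many $w(\mathbf{i})$ survive modulo $T$, which is what makes the induction over $\mathbf{I}$ (Lemma \ref{basic-step}) legitimate. Your proposal is therefore circular: it uses as scaffolding the finiteness of the relevant part of $\mathbf{I}$, which is available only as a consequence of the proposition being proved. The same problem resurfaces in your inductive step, where you treat $w(\mathbf{i^+})$ as a product of ``finitely many relevant'' $\C\F$-subgroups; without solubility modulo a $\C\F$-subgroup there are infinitely many $\mathbf{j}>\mathbf{i}$, and Lemma \ref{normalclosure} only handles finite products.

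There is also a secondary gap inside your ``base step'': quotienting by (the normal closure of) $w(H\cap G^{(i_1)},\dots,H\cap G^{(i_n)})$ does not let you replace $w(g_1h_1,\dots,g_nh_n)$ by $w(g_1,\dots,g_n)$ for coset representatives $g_i$; that substitution needs Lemma \ref{M}, whose hypothesis is the vanishing (modulo the subgroup you factor out) of \emph{all} mixed subgroups $w_J(A_i;H\cap A_l)$ for proper $J$, and establishing those requires the further Baire-category arguments of Lemmas \ref{step1} and \ref{step2} --- again machinery that the paper only deploys after Proposition \ref{delta}. The paper's actual proof avoids $\mathbf{I}$ entirely: from Lemma \ref{H} it gets $H^{(n)}\le w(H)\in\C$; it writes $K=G^{(n)}=\langle L,a_1,\dots,a_s\rangle$ with $L=K\cap H$ open in $K$ and $a_1,\dots,a_s$ finitely many $\delta_n$-values; Lemma \ref{H_a-multi} makes each $a_j$ an FC-element modulo a normal $\C$-subgroup; and it then iterates Lemma \ref{FC} $n$ times down the derived series of $K$, accumulating normal $\C$-subgroups $N_1\le\dots\le N_n$, to conclude that $L^{(n)}(K^{(n)}\cap N_n)$ has finite index in $K^{(n)}=G^{(2n)}$, whence $G^{(2n)}\in\C\F$. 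Your FC-plus-abelian instinct is the right ingredient, but it has to be run inside $K=G^{(n)}$ against the open subgroup $L$, step by step down the derived series, not against the poset $\mathbf{I}$.
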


\begin{proof} By Lemma \ref{H} there exists an open normal subgroup $H$ such that $w(H)$ is in $\C$. Lemma \ref{lem:delta_k} implies that $H^{(n)}$ is in $\C$. Let $K=G^{(n)}$ and $L=K\cap H$. Note that $L$ is open in $K$. Choose a  finite set of $\delta_n$-values $a_1,\dots,a_s$ such that
 $K=\langle L, a_1,\dots,a_s\rangle$ and let $H_{a_1},\dots,H_{a_s}$ be normal open subgroups of $G$ such that $[H_{a_j}, a_j]$ is in $\C$ for every $j$ (see Lemma \ref{H_a-multi}). Note that for each $j$ the subgroup $[H_{a_j}, a_j]$ is a normal subgroup of $H_{a_j}$ so $\langle[H_{a_j},a_j]^G \rangle$ is in $\C$. Let $N_1\le K$ be the subgroup generated by $L^{(n)}$ and the subgroups $\langle [H_{a_j},a_j]^G\rangle $ for $j=1,\dots,s$. Note that $N_1$ is in $\C$. The images of $a_1,\dots,a_s$ in the quotient $G/N_1$ are $FC$-elements while the image of $L$ in $G/L'$ is abelian. Therefore by 
  Lemma \ref{FC} the group $KN_1/L'N_1$ 
 has finite 
 derived group. In other words $L'N_1$ has finite index in $K'N_1$. 
 In particular there exist finitely many  $\delta_n$-values $b_1,\dots,b_t$ such that 
   $K'N_1=\langle L', b_1,\dots, b_t, N_1\rangle$.  
   
    As above, there exist   normal open subgroups 
 $H_{b_1},\dots,H_{b_t}$ of $G$
 such that $\langle [H_{b_j},b_j]^G\rangle $ is in $\C$ for every $j$.
  Let $N_2$ be the subgroup generated by $N_1$ 
 and the subgroups $\langle [H_{b_j},b_j]^G\rangle $ for $j=1,\dots,t$. Note that
 $N_2$ is in $\C$. 
  Again, $b_1N_2,\dots,b_tN_2$ are FC-elements in $G/N_2$ and arguing as before we
 obtain that $L^{(2)}N_2$ has finite index in $K^{(2)}N_2$. By iterating this argument we get that $L^{(n)}N_n$ has finite index in $K^{(n)}N_n$
 for some normal $\C$-subgroup $N_n$, so $L^{(n)}(K^{(n)}\cap  N_n)$ has finite index in $K^{(n)}=G^{(2n)}$. 
 As $L^{(n)}\le H^{(n)}$ is in $\C$ it follows that $G^{(2n)}$ is in $\C\F$, as desired.
 \end{proof}
 
Recall the notation introduced in Section \ref{sec:prelim}: whenever $I$ is a subset
 of $\{1, \dots ,n \}$ and $A_{i_1}, \dots , A_{i_s}$ and   $B_{l_1}, \dots , B_{l_t}$  are families 
 of subsets of $G$ with indices running  over $I$ and  $\{1, \dots ,n \} \setminus I$, respectively,  we  write 
  $$w_I(A_i; B_l)$$ 
 for the subgroup $w(X_1, \dots , X_n)$, where $X_k=A_k$ if $k \in I$, and $X_k=B_k$ otherwise. 
Moreover, whenever $a_i\in A_i$ for $i\in I$ and $b_l\in B_l$ for $l\in \{1,\dots,n\}\setminus I$, the symbol 
 $w_I(a_i;b_l)$ stands for the element $w(x_1, \dots , x_n)$, where $x_k=a_k$ if $k \in I$, and $x_k=b_k$ otherwise.
 
 Furthermore, given 
 $\mathbf{i}=(i_1,\ldots,i_n) \in \mathbf{I}$, we write
\[w(\mathbf{i})=w(G^{(i_1)},\ldots,G^{(i_n)})\]
 for  the subgroup generated by the $w$-values $w(g_1,\dots,g_n)$ with  $g_{j} \in G^{(i_j)}$ and we set 
$w(\mathbf{i^+})=\prod w(\mathbf{j} )$, 
 where the product is taken over all $\mathbf{j} \in \mathbf{I} $ such that $\mathbf{j}>\mathbf{i}$.

\begin{lemma}\label{step1}
Assume Hypothesis \ref{hyp}.  Let  $A_1,\dots,A_n$ be normal subgroups of $G$  and let $I$  be a  proper 
subset of $\{1, \dots, n\}$. 
Assume that there exist  a normal $\C\F$-subgroup $T$ of $G$ and an open normal subgroup $H$  such that:  
\begin{itemize}
\item[(*)]  
$w_J (A_i; H\cap A_l) \le T$ for every proper subset $J$ of $I$.
\end{itemize}
Then for any given set of elements $\{g_i \}_{i\in I }$, where $g_i \in A_i$,  there exist  an open normal subgroup $U$  of $G$, contained in $H$, and a normal $\C\F$-subgroup $N$ of $G$, containing $T$, such that 
$$w_I(g_i; U\cap A_l) \le N.$$ 
\end{lemma}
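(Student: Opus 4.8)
The plan is to combine a Baire category argument with the combinatorics of multilinear commutator words, using hypothesis~(*) to push all ``lower-complexity'' correction terms into $T$. Put $B=\{1,\dots,n\}\setminus I$, which is non-empty since $I$ is proper.

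\emph{Step 1 (Baire, arranged so that Lemma~\ref{uno} applies).} I regard the $w$-values $w(g_ih_i\ \text{at}\ I;\ x_l\ \text{at}\ B)$, with $h_i$ ranging over $H\cap A_i$ ($i\in I$) and $x_l$ over $A_l$ ($l\in B$), as the values of a continuous map on the profinite group $\prod_{i\in I}(H\cap A_i)\times\prod_{l\in B}A_l$. Covering this group by the closed sets $S_k=\{((h_i),(x_l)) : w(g_ih_i;x_l)\in\g_kG_k\}$ and invoking Baire's theorem, I get an open normal subgroup $U_0\le H$ of $G$, elements $c_i\in H\cap A_i$ ($i\in I$) and $c_l\in A_l$ ($l\in B$), and an index $k$ such that, setting $b_i:=g_ic_i\in A_i$,
$$\X_w(b_i(U_0\cap A_i)\ \text{at}\ I;\ c_l(U_0\cap A_l)\ \text{at}\ B)\subseteq\g_kG_k .$$
Every slot is now a coset, so Lemma~\ref{uno} (with the role of its ``$H$'' played by $U_0$, of its ``$V$'' by $G_k$, its ``$g$'' by $\g_k$, the family $b_i,c_l$, and the proper index set $I$) gives
$$M:=w_I(b_i(U_0\cap A_i)\ \text{at}\ I;\ U_0\cap A_l\ \text{at}\ B)\le G_k ,$$
so $M\in\C$; also $M\le U_0$, since at least one slot (an index in $B$) ranges over $U_0\cap A_l\le U_0$.

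\emph{Step 2 ($\langle M^G\rangle\in\C$).} For each $i\in I$ the set $\{v\in G:[b_i,v]\in U_0\cap A_i\}$ is easily seen to be a subgroup containing $U_0$, hence open; intersecting over $i\in I$ and taking the normal core yields an open normal subgroup $W\trianglelefteq G$ with $U_0\le W$ and with $b_i^{v}\in b_i(U_0\cap A_i)$ for all $v\in W$, $i\in I$; consequently $M^{v}=M$ for every $v\in W$. As $M\le U_0\le W$ and $W\trianglelefteq G$, the normal closure $\langle M^G\rangle$ lies in $W$, is generated by the finitely many conjugates $M^{g_1},\dots,M^{g_r}$ (one for each coset of $W$ in $G$), and each $M^{g_j}$ is normal in $W$ and lies in $\C$; hence $\langle M^G\rangle=M^{g_1}\cdots M^{g_r}$ is in $\C$ by the closure property of the class $\C$. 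Therefore $N:=\langle M^G\rangle\,T$ is a normal subgroup of $G$ containing $T$, and, being a product of two normal $\CF$-subgroups, it lies in $\CF$ by Lemma~\ref{normalclosure}.

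\emph{Step 3 (return to the given $g_i$; the technical core).} I take $U:=U_0$ (so $U\le H$) and show $w_I(g_i;z_l)\in N$ for all $z_l\in U_0\cap A_l$ ($l\in B$), which gives $w_I(g_i;U\cap A_l)\le N$. Writing $g_i=b_ic_i^{-1}$ with $c_i^{-1}\in H\cap A_i$, I have $w_I(g_i;z_l)=w(b_ic_i^{-1}\ \text{at}\ I;\ z_l\ \text{at}\ B)$, and I distribute the product $b_ic_i^{-1}$ in each slot $i\in I$ in turn, using the commutator identities underlying Lemmas~\ref{zerobis} and~\ref{pow_old}. This expresses $w(b_ic_i^{-1};z_l)$ as a product of $2^{|I|}$ conjugated $w$-values, each determined by choosing in every slot $i\in I$ either $b_i$ or $c_i^{-1}$ (with some remaining slots replaced by conjugates, which remain in the same normal subgroups $A_m$). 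The term with every $i\in I$ filled by $b_i$ is a conjugate of some $w_I(b_i;z_l')$ with $z_l'\in U_0\cap A_l$, hence lies in a conjugate of $M$, so in $\langle M^G\rangle$. Each remaining term has some slot $i_0\in I$ filled by $c_{i_0}^{-1}\in H\cap A_{i_0}$, while its other slots lie in the $A_m$ ($m\in I$) or in $H\cap A_l$ ($l\in B$, since $z_l\in U_0\le H$); so it is a conjugate of an element of $\X_{w_J}(A_m;H\cap A_l)$ for some $J\subseteq I\setminus\{i_0\}$, which is a proper subset of $I$, and hence lies in $w_J(A_m;H\cap A_l)\le T$ by~(*). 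Since $T$ is normal, every correction term lies in $T$, so $w_I(g_i;z_l)\in\langle M^G\rangle\,T=N$, as required.

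\emph{On the main obstacle.} The key device is the Baire step: plugging the fixed $g_i$ in directly would leave the $I$-slots as single points, to which Lemma~\ref{uno} does not apply; instead each $g_i$ is perturbed by an element of $H\cap A_i$ so that after localization all slots become cosets. The price is reverting from $b_i(U_0\cap A_i)$ to the given $g_i$, which is precisely the multilinear expansion of Step~3, and this is the delicate part: one must check that the single ``diagonal'' term stays inside $M$ up to conjugacy (harmless, since $\langle M^G\rangle$ and $T$ are $G$-invariant) while every other term is a $w_J$-value with $J\subsetneq I$, so that~(*) applies --- which requires keeping careful track of which slots are ``large'' (ranging in some $A_m$) versus ``small'' (ranging in $H\cap A_m$), and of the conjugations produced along the way.
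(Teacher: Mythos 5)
Your Steps 1 and 2 are sound and run parallel to the paper's own argument (Baire over a product of cosets, Lemma~\ref{uno} to get $M\le G_k$, and a normal-closure argument giving $\langle M^G\rangle\in\C$; your auxiliary subgroup $W$ is superfluous, since $U_0$ itself normalizes $M$ because $[b_i,U_0]\le U_0\cap A_i$, but that is harmless). The genuine gap is in Step 3, at the claim that the ``diagonal'' term of your expansion lies in a conjugate of $M=w_I\bigl(b_i(U_0\cap A_i);U_0\cap A_l\bigr)$. When you distribute $g_i=b_ic_i^{-1}$ slot by slot and then reassemble each resulting term as a single conjugated $w$-value, the \emph{inner} conjugations you are forced to apply to the remaining entries are by elements built from the $c_j$'s and commutators with them; these lie in $H$ but not in $U_0$. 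Hence the $I$-entries of the diagonal term are only known to lie in the cosets $b_i(H\cap A_i)$, not in $b_i(U_0\cap A_i)$. For example, with $w=[[x_1,x_2],[x_3,x_4]]$ and $I=\{1,3\}$, the diagonal term is a conjugate of $\bigl[[b_1,z_2],[b_3[b_3,c_3^{-1}c_1],z_4^{c_3^{-1}c_1}]\bigr]$, and $[b_3,c_3^{-1}c_1]\in H\cap A_3$ need not lie in $U_0\cap A_3$; there is no reason for such an element to belong to any single conjugate $M^g$, nor to $\langle M^G\rangle$. Eliminating this discrepancy between $H\cap A_i$ and $U_0\cap A_i$ is precisely the content of Lemma~\ref{M} (perturbations of the $I$-entries by $H\cap A_i$ do not change $w_I$-values once all $w_J(A_i;H\cap A_l)$ with $J\subsetneq I$ vanish), and a one-pass expansion cannot substitute for it: every pass re-perturbs the other $I$-entries inside $b_i(H\cap A_i)$, so your bookkeeping never terminates with the diagonal term inside $\langle M^G\rangle$.

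The repair is exactly the paper's final step, and it is available to you verbatim: set $N=\langle M^G\rangle T$ and pass to $G/N$. Since $w_J(A_i;H\cap A_l)\le T\le N$ for every proper subset $J$ of $I$, Lemma~\ref{M} (applied in $G/N$ with $h_i=c_iu_i\in H\cap A_i$ for $i\in I$ and $h_l=z_l\in U_0\cap A_l$) gives $w_I(g_i;z_l)N=w_I(g_ic_iu_i;z_l)N=w_I(b_iu_i;z_l)N$, and the last element lies in $M\le N$; hence $w_I(g_i;U_0\cap A_l)\le N$, which is what your Step 3 was meant to prove. Apart from this missing appeal to Lemma~\ref{M} (whose proof is itself the careful induction your sketch gestures at), your route coincides with the paper's, the only cosmetic difference being that your Baire argument lets the slots outside $I$ range over $A_l$ rather than $H\cap A_l$.
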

\begin{proof}
Consider the sets 
\[S_j=\{(h_1,\dots,h_{n})
\mid h_k\in H\cap A_k \;{\textrm{and}}\; w_I (g_i h_i ; h_l)\in g_jG_j \}.\]
Note that the sets $S_j$ are closed in the group $(H \cap A_1)\times \cdots\times( H \cap A_n)$ and cover 
the whole group. By the Baire  category theorem at least one of these sets has non-empty interior. Hence, there exist
an integer $r$, open subgroups
$V_k$ of $H \cap A_k$, and elements $b_k\in H \cap A_k$ for every 
$k=1,\dots,n$  such that 
$$w_I(g_i b_iv_i; b_lv_l)\in \g_rG_r,$$ 
 for every $v_i \in V_i$.  
 Each subgroup $V_k$ is of the form $V_k=U_k\cap H\cap A_k$ where $U_k$ is an open subgroup
of $G$ and we can assume that $U_k$ is normal in $G$.
Let $U=U_1\cap\dots\cap U_{n}\cap H$. Note that $U$ is an open normal subgroup of $G$ contained
in $H$. 
Thus 
$$w_I(g_i b_iu_i; b_lu_l)\in \g_rG_r,$$
 for every $u_i \in  U \cap A_i$.  
 Now we apply  Lemma \ref{uno} to pass from the cosets $b_l(U \cap A_l)$ to the subgroups  $U \cap A_l$, for every $l \notin I$. 
It follows from Lemma \ref{uno}
  that 
 the subgroup  
$$K=w_I(g_i b_i(U\cap A_i); U\cap A_l)$$
 is contained in  $G_r$ and so it is in $\C\F$. 
Note that $K \le U$. Since $U$ has finite index in $G$ and normalizes $K$, by Lemma \ref{normalclosure}, $\langle K^G\rangle$ is in $\C$. 

Set $N= T \langle K^G\rangle$ and note that $N\in\C$. Using (*) and the fact 
 that $T \le N$ and $b_i(U\cap A_i)\subseteq H\cap A_i$, we can apply Lemma \ref{M} to the group $G/N$. Therefore $$w_I(g_i ; U\cap A_l)N=w_I(g_i b_i (U\cap A_i); U\cap A_l)N.$$ Since $w_I(g_i b_i (U\cap A_i); U\cap A_l) \le N$,   we deduce that $$w_I(g_i  ; U\cap A_l) \le N,$$ as desired. 
 \end{proof}

\begin{lemma}\label{step2} Assume Hypothesis \ref{hyp}. Let  $A_1,\dots,A_n$ be normal subgroups of $G$  and let $I$  be a  proper subset of $\{1, \dots,n\}$. Assume that there exist  a normal $\C\F$-subgroup $T$ of $G$ and an open normal subgroup $H$  such that:  
\begin{itemize}
\item[(*)]  
$w_J (A_i; H\cap A_l) \le T$ for every proper subset $J$ of $I$.
\end{itemize}
Then there exist  an open normal subgroup $U$  of $G$, contained in $H$, and a normal $\C\F$-subgroup $N$ of $G$, containing $T$, such that 
$$w_I( A_i; U\cap A_l) \le N.$$ 
\end{lemma}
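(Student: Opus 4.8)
The plan is to deduce Lemma~\ref{step2} from Lemma~\ref{step1} by reducing the problem to finitely many prescribed tuples of elements. Since $H$ is open in $G$, the subgroup $H\cap A_i$ has finite index in $A_i$ for every $i\in I$, so I would fix a finite set $c_{i,1},\dots,c_{i,m_i}\in A_i$ of representatives of the cosets of $H\cap A_i$ in $A_i$; then every $g_i\in A_i$ with $i\in I$ can be written as $g_i=c_{i,k}h_i$ with $1\le k\le m_i$ and $h_i\in H\cap A_i$. For each of the finitely many tuples $\mathbf{k}=(k_i)_{i\in I}$ with $1\le k_i\le m_i$, apply Lemma~\ref{step1} to the set of elements $\{c_{i,k_i}\}_{i\in I}$: this yields an open normal subgroup $U_{\mathbf{k}}\le H$ of $G$ and a normal $\C\F$-subgroup $N_{\mathbf{k}}\ge T$ of $G$ with $w_I(c_{i,k_i};U_{\mathbf{k}}\cap A_l)\le N_{\mathbf{k}}$. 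I would then set $U=\bigcap_{\mathbf{k}}U_{\mathbf{k}}$ and $N=\prod_{\mathbf{k}}N_{\mathbf{k}}$, the intersection and product being over the finitely many tuples $\mathbf{k}$. Thus $U$ is an open normal subgroup of $G$ contained in $H$, and by Lemma~\ref{normalclosure} $N$ is a normal $\C\F$-subgroup of $G$ containing $T$.

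It then remains to verify that $w_I(A_i;U\cap A_l)\le N$. For this I would take an arbitrary generating $w$-value $w(x_1,\dots,x_n)$ with $x_i\in A_i$ for $i\in I$ and $x_l\in U\cap A_l$ for $l\notin I$, write $x_i=c_{i,k_i}h_i$ with $h_i\in H\cap A_i$, and — arguing exactly as in the proof of Lemma~\ref{step1}, using hypothesis $(*)$ together with $T\le N$ — apply Lemma~\ref{M} in $G/N$ to obtain $w(x_1,\dots,x_n)=w_I(x_i;x_l)\equiv w_I(c_{i,k_i};x_l)\pmod{N}$. Since $U\le U_{\mathbf{k}}$ we have $x_l\in U_{\mathbf{k}}\cap A_l$, so $w_I(c_{i,k_i};x_l)\in w_I(c_{i,k_i};U_{\mathbf{k}}\cap A_l)\le N_{\mathbf{k}}\le N$; hence $w(x_1,\dots,x_n)\in N$, and since these elements generate $w_I(A_i;U\cap A_l)$ the desired inclusion follows.

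The only genuine obstacle is precisely the gap between Lemma~\ref{step1} and Lemma~\ref{step2}: passing from a single prescribed tuple of elements in $\prod_{i\in I}A_i$ to the whole subgroup $w_I(A_i;U\cap A_l)$. What makes this harmless is that, modulo the ``error subgroup'' $T$ supplied by $(*)$, Lemma~\ref{M} shows that such a $w$-value depends on its $I$-entries only through their cosets modulo $H$; combined with the finiteness of $[A_i:H\cap A_i]$ this reduces the task to finitely many applications of Lemma~\ref{step1}, after which Lemma~\ref{normalclosure} lets one amalgamate the resulting $\C\F$-subgroups. The one technical detail to be careful about is that $(*)$ does legitimately license the use of Lemma~\ref{M} in $G/N$, which is the very point already addressed in the proof of Lemma~\ref{step1}.
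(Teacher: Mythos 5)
Your proposal is correct and follows essentially the same route as the paper: choose finite coset representatives of $H\cap A_i$ in $A_i$ for $i\in I$, apply Lemma~\ref{step1} to each of the finitely many tuples of representatives, take $U$ as the intersection of the resulting open subgroups and $N$ as the product of the resulting $\C\F$-subgroups (Lemma~\ref{normalclosure}), and then use $(*)$ with Lemma~\ref{M} modulo $N$ to replace arbitrary entries $x_i\in A_i$ by their representatives. The only cosmetic difference is that you argue element-wise on generators while the paper states the corresponding subgroup equality $w_I(g_i(H\cap A_i);U\cap A_l)=w_I(g_i;U\cap A_l)$ directly.
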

\begin{proof}
For each $i \in I$ choose a set $R_i$ of coset representatives of $H\cap A_i$ in $A_i$. Note that all those sets are finite. We apply Lemma \ref{step1} to each choice of elements $\bar g =\{g_i\}_{i \in I},$ with  $g_i \in R_i $: let $U_{\bar g}$ and $N_{\bar g}$ be normal subgroups of $G$ such that $w_I(g_i ; U_{\bar g} \cap A_l) \le N_{\bar g}$. The existence of the subgroups $U_{\bar g}$ and $N_{\bar g}$ is guaranteed by Lemma \ref{step1}. Remark that there are only a finitely many subgroups $U_{\bar g}$ and $N_{\bar g}$. Then  $U =\cap_{\bar g} U_{\bar g}$ is a normal open subgroup of $G$ contained in $H$  and $N= \prod_{\bar g} N_{\bar g}$ is a normal 
$\C\F$-subgroup  containing $T$, such that 
 $$w_I(g_i ; U \cap A_l)\le N$$ for every choice of $g_i\in R_i$. 
Note that, by condition (*) and  Lemma \ref{M}, 
\[ w_I(g_i (H \cap A_i); U \cap A_l) =  w_I(g_i ; U \cap A_l) \le N. \]
Since  $A_i = \cup_{g_i \in R_i} g_i (H \cap A_i)$ for every $i \in I$, we conclude that  
 $$w_I (A_i ; U \cap A_l )= \langle \cup_{\bar g} w_I(g_i (H \cap A_i); U \cap A_l)\rangle  
 \le N,$$
 as desired. 
\end{proof}

\begin{lemma}\label{basic-step}
 Assume Hypothesis \ref{hyp}. 
Assume that there exist an $n$-tuple  $\mathbf{i} \in \mathbf{I}$,  a normal $\C\F$-subgroup $T$ of $G$ and an open normal subgroup $H$  such that:  
\begin{itemize}
\item $w(\mathbf{i^+}) \le T$. 
\item  $w (H) \le T.$
\end{itemize}
 Then  
$w( \mathbf{i} )$ is in $\C\F$. 
\end{lemma}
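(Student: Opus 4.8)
The plan is to apply Lemma~\ref{step2} repeatedly in order to control $w$ evaluated on every \emph{proper} subset of its variables, and then to treat the full set of variables by hand, exploiting that modulo a suitable normal $\C\F$-subgroup the relevant value subgroup becomes abelian and torsion, hence finite.

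Write $A_j=G^{(i_j)}$ for $j=1,\dots,n$, so the $A_j$ are normal in $G$ and $w(\mathbf{i})=w(A_1,\dots,A_n)$. First I would construct, by induction on $k=0,1,\dots,n-1$, an open normal subgroup $U_k\le H$ and a normal $\C\F$-subgroup $N_k\ge T$ of $G$ such that
\[
w_J(A_i;\,U_k\cap A_l)\le N_k\qquad\text{for every }J\subseteq\{1,\dots,n\}\text{ with }|J|\le k .
\]
For $k=0$ one takes $U_0=H$ and $N_0=T$, since $w(U_0\cap A_1,\dots,U_0\cap A_n)\le w(H)\le T$. For the inductive step, each $J$ with $|J|=k+1$ is a proper subset of $\{1,\dots,n\}$ and, by the inductive hypothesis, condition~(*) of Lemma~\ref{step2} holds for $I=J$ with the pair $(U_k,N_k)$ in place of $(H,T)$; Lemma~\ref{step2} then supplies subgroups $U_J$ and $N_J$, and one sets $U_{k+1}=\bigcap_J U_J$ and $N_{k+1}=\prod_J N_J$, a normal $\C\F$-subgroup by Lemma~\ref{normalclosure}. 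Taking $U=U_{n-1}$ and $N=N_{n-1}$, we obtain in particular
\[
w(A_1,\dots,A_{j-1},\,U\cap A_j,\,A_{j+1},\dots,A_n)\le N\qquad\text{for all }j .
\]

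Now pass to $\overline G=G/N$ and write $\overline X$ for images of subgroups. Since $w(\mathbf{i^+})\le T\le N$, Lemma~\ref{lem:ab} applied in $\overline G$ shows that $\overline{w(\mathbf{i})}$ is abelian. For each $j$ fix a finite transversal $R_j$ of $U\cap A_j$ in $A_j$. The key point is that every $w$-value $w(\overline a_1,\dots,\overline a_n)$ with $a_j\in A_j$ coincides in $\overline G$ with $w(\overline c_1,\dots,\overline c_n)$ for suitable $c_j\in R_j$. To see this one reduces the slots one at a time, processing them in an order of non-decreasing $i_j$: having replaced the entries of the already-processed slots by elements of the corresponding $R_j$, write the current entry of the next slot $s$ (still an element of $A_s$) as $c_su_s$ with $c_s\in R_s$, $u_s\in U\cap A_s$, and apply Lemma~\ref{pow_old} (legitimate because $w(\mathbf{i^+})=1$ in $\overline G$); the factor it splits off carries $\overline{u_s}$, or a conjugate of it which still lies in $U\cap A_s$ by normality, in slot $s$, so that factor lies in the subgroup displayed above and is trivial in $\overline G$, while in the surviving factor the already-processed slots are left \emph{unchanged} precisely because the corresponding $i_j$'s are $\le i_s$. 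After all $n$ slots are processed one is left with $w(\overline c_1,\dots,\overline c_n)$, $c_j\in R_j$. Hence $\overline{w(\mathbf{i})}$ is topologically generated by the \emph{finite} set $F=\{\,w(\overline c_1,\dots,\overline c_n):c_j\in R_j\,\}$.

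By Corollary~\ref{pow} every power of a $w$-value with entries in $A_1,\dots,A_n$ is again such a $w$-value; reducing it as above, it lies in $F$, so each element of $F$ has finite order. Thus the abstract subgroup $\langle F\rangle$ of $\overline{w(\mathbf{i})}$ is a finitely generated abelian torsion group, hence finite, hence closed, so $\overline{w(\mathbf{i})}=\langle F\rangle$ is finite. Therefore $w(\mathbf{i})\cap N$ is a finite-index normal subgroup of $w(\mathbf{i})$ contained in $N$, hence in $\C\F$; and since a group having a normal $\C\F$-subgroup of finite index is itself in $\C\F$ (pass to the core in $w(\mathbf{i})$ of a finite-index normal $\C$-subgroup of $w(\mathbf{i})\cap N$), we conclude that $w(\mathbf{i})\in\C\F$. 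I expect the most delicate point to be the reduction of an arbitrary $w$-value to one of the finitely many members of $F$ modulo $N$: one must prevent the conjugates introduced by Lemma~\ref{pow_old} from disturbing the slots already placed into the transversals, which is exactly what processing the variables in order of non-decreasing $i_j$ achieves; some bookkeeping is also needed in the inductive construction of $U$ and $N$ to keep hypothesis~(*) valid as $U$ shrinks and $N$ grows.
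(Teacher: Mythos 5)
Your proof is correct and follows essentially the same route as the paper: the same induction over proper subsets via Lemma \ref{step2} to produce $U$ and $N$, followed by finiteness of the image of $w(\mathbf{i})$ in $G/N$ using transversals together with Lemma \ref{lem:ab} and Corollary \ref{pow}. The only (inessential) deviation is that where you reduce a general $w$-value to the finite set $F$ by iterating Lemma \ref{pow_old} with a non-decreasing ordering of the $i_j$ (a precaution that is not actually needed, since the conjugated entries occur only in the factor that dies modulo $N$), the paper gets the same reduction in one step by applying Lemma \ref{M} with $I=\{1,\dots,n\}$.
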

\begin{proof} Let $\mathbf{i}=(i_1,\ldots,i_n)$. 
 We will write for short $$A_j = G^{(i_j)},$$ for every $j= 1,\dots,n$.
It is enough to prove the following statement: 
 for every subset $I$ of $\{1, \dots , n\}$, there exist an open normal subgroup $U_I$ of $G$ contained in $H$ and a normal 
 $\C\F$-subgroup $N_I$ containing $T$ such that 
 $w_I( A_i; U_I\cap A_l) \le N_I.$
 
The proof is by induction on the size $k$ of $I$. If $k=0$, then $I= \emptyset$ and $$ w_\emptyset(A_i ; H \cap A_i)= w(H\cap A_1, \dots , H \cap A_n) \le w(H)\le T.$$

So assume $k>0$. Let $J_1, \dots J_s$ 
 be  all the  proper subsets of $I$. By induction, for each $t=1,\dots,s$  there exist an open normal subgroup $U_{t}$ of $G$ contained in $H$ and a normal  $\C\F$-subgroup $N_{t}$ containing $T$ such that $w_{J_t}( A_i; U_{t}\cap A_l) \le N_{t}$. Let $U=\cap_t U_{t}$ and $N=\langle N_{t}\vert t =1, \dots , s \rangle$. Then $$w_J( A_i; U \cap A_l) \le N$$ for every proper subset $J$ of $I$. 

If $k\ne n$ we can apply Lemma \ref{step2} to $I$. We obtain that there exist an open normal subgroup $U_I$ of $G$  contained in $H$ and a normal 
$\C\F$-subgroup $N_I$ containing $T$ such that $w_I( A_i; U_I\cap A_l) \le N_I$, as desired. 
 
 So we are left with the case when $k=n$, and thus, by definition, 
$w( A_1,\dots, A_n)=w( \mathbf{i} ).$ 
 
For each $i \in I$ choose a set $R_i$ of coset representatives of $ H\cap A_i$ in $A_i$. Note that all those sets are finite. We pass to the quotient $\bar G=G/N$. By Lemma \ref{M} for each choice of elements $\bar g_1,\dots,\bar g_n$ with  $\bar g_i \in \bar R_i $ and for each $\bar h_1,\dots,\bar h_n\in \bar U\cap \bar A_i$, we have 
$$w(\bar g_1\bar h_1,\dots,\bar g_n\bar h_n)=w(\bar g_1,\dots,\bar g_n).$$
So the set $$\X_w(\bar A_1, \dots ,\bar A_n)$$ is finite.

By Lemma \ref{pow} every power of an element in $\X_w(\bar A_1, \dots ,\bar A_n)$ is again in $\X_w(\bar A_1, \dots ,\bar A_n)$. So every  element in $\X_w(\bar A_1, \dots ,\bar A_n)$ has finite order. Therefore  $w(\bar A_1,\dots,\bar A_n)$ is generated by finitely many elements of finite order, and being abelian by Lemma \ref{lem:ab}, it is actually finite.
It follows that $w(A_1,\dots,A_n)$ is in $\C\F$, as desired.
\end{proof}

We are now ready to complete the proof of Theorem \ref{main}. 

\vspace{8pt}
\noindent {\bf Proof of Theorem \ref{main}} Obviously, if $w(G)$ is in $\C\F$ then the set of $w$-values in $G$ is covered by countably many cosets of $\mathcal{C}$-subgroups. Therefore we only need to show that if the set of $w$-values is covered by countably many $\C$-subgroups then  $w(G)$ is in $\C\F$. 

Thus, assume that the set of $w$-values in $G$ is covered by countably many $\C$-subgroups. Proposition \ref{delta} states that $G^{(2n)}$ is in $\C\F$.
 
 Let $H$ be as in Lemma \ref{H}. Then $w(H)$ is in $\C$. 
  Let $T=G^{(2n)}w(H)$. Then $T$ is in $\C\F$  by Lemma \ref{normalclosure}. 
   Since
 $G^{(2n)}\le T$ it follows that $G/T$ is soluble. 
 
 Thus there exist only finitely many 
${\bf i}\in \mathbf{I}$ such that $w({\bf i})T/T\neq1$. 
By induction on the number of such $n$-tuples ${\bf i}$, we will prove that every subgroup $w({\bf i})$ is in $\C\F$. 

Choose ${\bf i}=(i_1,\dots,i_n)\in \mathbf{I}$ such that $w({\bf i})T/T\neq1$ while $w({\bf j})T/T=1$ whenever ${\bf i}<{\bf j}$. 
  Now we apply  Lemma \ref{basic-step} 
 and  we obtain that $w({\bf i})$ is in $\C\F$. Let $N=w({\bf i})T$.
 Then induction on the number of ${\bf j}\in \mathbf{I}$ such that $w({\bf j})\not\le N$ leads us to the conclusion that $w(G)$ is in $\C\F$. \qed
\bigskip

\noindent ACKNOWLEDGMENTS. The problem of studying groups in which $w$-values are covered by countably many cosets of subgroups was suggested to us by J. S. Wilson. We thank him for suggesting the problem. The third author was supported  by  FAPDF and FINATEC.


\begin{thebibliography}{99}

\bibitem{adm2012} C. Acciarri, P. Shumyatsky, On verbal subgroups in finite and profinite groups, \textit{Algebra Discrete Math.} 
{\bf14} (2012), 1--13.

\bibitem{surveyrendiconti} C. Acciarri and P. Shumyatsky, {\it Coverings of commutators in profinite groups},
 Rend. Semin. Mat. Univ. Padova 137 (2017), 237--257.


\bibitem{as} C. Acciarri and P. Shumyatsky, {\it On profinite groups in which commutators are covered by finitely many subgroups}, 
Math. Z. {\bf 274} (2013), 239--248. 

\bibitem{DMS1} E.\ Detomi, M.\ Morigi and  P.\ Shumyatsky, {\it On countable coverings of word values in profinite groups}, 
J. Pure Appl. Algebra {\bf 219} (2015), 1020--1030.
 
 \bibitem{jpaa2} E.\ Detomi, M.\ Morigi and  P.\ Shumyatsky,  {\it On conciseness of words in profinite groups}, J. Pure Appl. Algebra 
 {\bf 220} (2016), 3010--3015.
 

\bibitem{DMS-revised} E.\ Detomi, M.\ Morigi and  P.\ Shumyatsky, {\it On groups covered by locally nilpotent subgroups},
Ann. Mat. Pura Appl. (4) 196 (2017), 1525--1535.

\bibitem{DMS-nilpotent}  E.\ Detomi, M.\ Morigi and  P.\ Shumyatsky, {\it On profinite groups with word values covered by nilpotent subgroups}, 
Israel J. Math, to appear. 

\bibitem{fernandez-morigi} 
G.\,A. Fern\'andez-Alcober, M. Morigi, {\it Outer commutator words are uniformly concise}. {J. London Math. Soc.} \textbf{82} (2010), 581--595.

\bibitem{neumann} B.\ H.\ Neumann,  {\it Groups covered by permutable subsets}, J. Lond. Math. Soc. 29 (1954), 236--248. 


\bibitem{Ke} J.\,L. Kelley,   {\it General Topology}, Van Nostrand, Toronto - New York - London, 1955.




\bibitem{S2} P. Shumyatsky, {\it Verbal subgroups in residually finite groups}, Q.J. Math. {\bf 51} (2000), 523--528.

 \bibitem{Snilp} P. Shumyatsky, {\it On profinite groups with commutators covered by nilpotent subgroups}, Rev. Mat. Iberoam. 
 {\bf 32} (2016), 1331--1339.

\end{thebibliography}
\end{document}